\definecolor{darkblue}{rgb}{0.0, 0.0, 0.8}
\newtheorem{lemma}{Lemma}
\newtheorem{theorem}{Theorem}
\newtheorem{proposition}{Proposition}
\newtheorem{definition}{Definition}
\newtheorem{remark}{Remark}
\newtheorem{corollary}{Corollary}
\newtheorem{assumption}{Assumption}
\newcommand{\be}{\begin{equation}}
\newcommand{\ee}{\end{equation}}
\newcommand{\E}{\mathbb E}
\newcommand{\dd}{\mathrm{d}}
\renewcommand{\P}{P}
\newcommand{\Ps}{{P^\ast}}
\newcommand{\tb}[1]{{{#1}}}
\newcommand{\vertiii}[1]{{\left\vert\kern-0.25ex\left\vert\kern-0.25ex\left\vert #1 
    \right\vert\kern-0.25ex\right\vert\kern-0.25ex\right\vert}}
\definecolor{amber(sae/ece)}{rgb}{1.0, 0.49, 0.0}
\definecolor{amethyst}{rgb}{0.6, 0.4, 0.8}
\definecolor{antiquefuchsia}{rgb}{0.57, 0.36, 0.51}
\definecolor{auburn}{rgb}{0.43, 0.21, 0.1}
\definecolor{cadmiumgreen}{rgb}{0.0, 0.42, 0.24}
\definecolor{byzantium}{rgb}{0.44, 0.16, 0.39}
\definecolor{burntsienna}{rgb}{0.91, 0.45, 0.32}
\newcommand{\bgcmt}[1]{{\color{burntsienna}}}
\newcommand{\ms}[1]{{\medskip\noindent #1}}
\begin{document}

\title[Stochastic Approximation in a Markovian Framework]{\sc \LARGE Stochastic Approximation in a Markovian Framework Revisited:
Lipschitz Continuity of the Poisson Equation\vspace*{5mm}}

\author[2]{\fnm{Algo} \sur{Car\`e}}\email{algo.care@unibs.it}

\author*[1,4]{\fnm{Bal\'azs Csan\'ad} \sur{Cs\'aji}}\email{csaji.balazs@sztaki.hun-ren.hu}

\author[3,4]{\fnm{Bal\'azs} \sur{Gerencs\'er}}\email{gerencser.balazs@renyi.hun-ren.hu}

\author[1]{\fnm{L\'aszl\'o} \sur{Gerencs\'er}}\email{gerencser.laszlo@sztaki.hun-ren.hu}

\author[3,4]{\fnm{Mikl\'os} \sur{R\'asonyi}}\email{rasonyi.miklos@renyi.hun-ren.hu}

\affil*[1]{\orgdiv{{HUN-REN} Institute for Computer Science and Control}, \orgname{Hungarian Research Network}, \orgaddress{\street{Kende u.\ 13-17.}, \city{Budapest}, \postcode{H-1111}, \country{Hungary}}}

\affil[2]{\orgdiv{Dipartimento di Ingegneria dell'Informazione}, \orgname{Universit\`a di Brescia}, \orgaddress{\street{via Branze 38}, \city{Brescia}, \postcode{25123}, \country{Italy}}}

\affil[3]{\orgdiv{{HUN-REN} Alfr\'ed R\'enyi Institute of Mathematics}, \orgname{Hungarian Research Network}, \orgaddress{\street{Reáltanoda u.\ 13-15.}, \city{Budapest}, \postcode{H-1053}, \country{Hungary}}}

\affil[4]{\orgdiv{Institute of Mathematics}, \orgname{E\"otv\"os Loránd University (ELTE)}, \orgaddress{\street{P\'azm\'any P\'eter s\'et\'any 1/C}, \city{Budapest}, \postcode{H-1117}, \country{Hungary}}}

\abstract{In this paper we revisit a fundamental technical issue within the theory of stochastic approximation (SA) in a Markovian framework, first proposed in the book by Djereveckii and Fradkov (1981), and further developed in much detail in the book by Benveniste, M{\'e}tivier, and Priouret (1990). This theory is instrumental in many application areas such as the statistical analysis of Hidden Markov Models arising in  telecommunication, quantized linear stochastic systems, and more recently in active learning and reinforcement learning. The problem at hand is the verification of the existence, uniqueness and Lipschitz-continuity of the solution of a parameter-dependent Poisson equation, in an appropriate weighted sup-norm, associated with a collection of Markov chains on general state spaces. Verification of the above facts is vital in the analysis of SA processes presented in \cite{benveniste1990} via the ODE (ordinary differential equations) method, requiring substantial technical effort. The motivation and focus of the paper is to address this technical issue, by  presenting a simple set of conditions, under which the above properties of the Poisson equation at hand can be conveniently established. The starting point of our work is an intricate result of Hairer and Mattingly (2011) proving that by tilting standard conditions of mainstream stability theory for Markov chains, the transition kernels prove to be contractions in the space of differences of probability measures in a suitable metric. To demonstrate the applicability of our results, the proposed conditions are verified for a class of queuing system with open-loop control. 
}

\maketitle
\newpage

\section{Introduction}
Stochastic approximation (SA) is a fundamental methodology for real-time statistical analysis in important application areas such as signal processing, control systems, and more recently in machine learning. Specifically many algorithms in adaptive filtering, recursive system identification, adaptive methods in input design, adaptive control or reinforcement learning rely on ideas of classical stochastic approximation theory initiated by the celebrated paper of Robbins and Monro \cite{RobbinsMonro1951} back in 1951. 

Stochastic approximation in a Markovian framework, first proposed in \cite{djereveckii1974application,djereveckii1981applied}, and extensively developed in the book \cite{benveniste1990}, was a significant contribution to the area, allowing the construction and analysis of statistical estimation methods for wide class of nonlinear systems, 
such as Hidden Markov Models (HMM-s), arising in telecommunication, or quantized linear stochastic systems. In machine learning, studying SA in a Markovian framework is instrumental for reinforcement learning (RL), such as TD-learning \cite{watkins1992q} and Q-learning \cite{sutton1988learning}. The theory presented in \cite{benveniste1990} is in a sense complementary to the widely used theory for recursive identification of linear stochastic systems, developed by Ljung in \cite{ljung1983theory}, with focus on mixing properties of the driving noise. 

The {\it contribution} of this paper is a significant addition to the theory of SA in a Markovian framework, developed in \cite{benveniste1990}, by introducing much simpler conditions for the Markov chain under which a key technical issue, required for the ODE analysis, can be resolved. {With all technicalities explained in sufficient details the paper is self-contained.} \footnote{ This paper is a significantly extended version of our paper published in the Proceedings of the 
58th IEEE Conference on Decision and Control \cite{PoissonCDC2019}.}

To provide the context of the present paper we briefly describe a few  
{technical aspects central in \cite{benveniste1990}.}
Following their predecessors, the authors of \cite{benveniste1990} present a model that boils down to the solution of a nonlinear algebraic equation, specifically defined in terms of a strictly stationary parameter-dependent Markov process $(X_n(\theta)),$ representing physical signals, their filtered values or a combination of these. 
The process may take its values in an abstract measurable space $\mathbf X$, such as a Euclidean space $\mathbb R^m$, a Hilbert space or a finite discrete set. The parameter $\theta \in \Theta$ may characterize the open-loop system dynamics, the effect of a controller, or the tentative value of the true parameter within system identification. In the context of \cite{benveniste1990}, as in the theory of recursive system identification and adaptive control {of} linear stochastic systems, see \cite{ljung1983theory}, $\Theta$ is typically a subset of a Euclidean space, $\mathbb R^k.$ We should note that in some recent machine learning applications,
such as parameter-free online learning \cite{cutkosky2018black} and stochastic approximation for kernel methods \cite{JMLR:v25:23-0168}, $\Theta$ is a subset of a 
(typically infinite dimensional) Banach or Hilbert space. Finally, studying ODE-based stochastic approximation in a Markovian framework is also crucial for RL methods \cite{JMLR:v26:24-0100}.

The dynamics of the Markov process $(X_n(\theta))$ would be classically described by its transition probability kernels $\P_\theta(x,A)$, with $x \in \mathbf X, \, A \subset  \mathbf X$, $A$ being measurable, and indeed, we shall follow suit in the rest of the paper. But for now it is preferable to take a system's point of view, and define the process 
explicitly via
\begin{equation}
X_{n+1}(\theta)   = F(\theta ; X_n(\theta), W_{n+1}),
\end{equation}
where $F$ is a measurable mapping, and $(W_{n})$ is a sequence of i.i.d. (independent, identically distributed) random variables. The sequence $(W_{n})$ may represent exogenous system noise, measurement noise, or a dither injected by the user. The objective is to identify or to tune the parameter so that some appropriately defined asymptotic cost function expressing reconstruction error or tracking error is minimized. The instantaneous cost {is}  a function of both $\theta$ and $x \in \mathbf X.$

Assuming $\theta \in \mathbb R^k$, an $\mathbb R^k$-valued pseudo-gradient of the {(instantaneus) cost function} w.r.t. (with respect to) $\theta$ will be defined, specific for the problem at hand, denoted by $H(\theta ; x).$ Then the
estimation problem reduces to 
solving the non-linear algebraic equation, for any $n$, 
in view of stationarity of $X_n(\theta),$
\begin{equation}
\label{eq:BMP basic-equation}
\E\hspace{0.1mm}\tb{\big[} \hspace{0.1mm} H(\theta ; X_n(\theta)) \hspace{0.1mm} \tb{\big]}\hspace{0.2mm}=\, 0.
\end{equation}
The practical objective is to find the root of \eqref{eq:BMP basic-equation}, denoted by $\theta^\ast,$ via a recursive algorithm based on computable approximations of $H(\theta ; X_n(\theta)).$ 

Computability means that the r.h.s.\ (right hand side) of \eqref{eq:SA X}-\eqref{eq:SA theta} below can be evaluated via the cyber-physical system at hand. 
The proposed SA algorithm of \cite{benveniste1990} is 
\begin{align}
\label{eq:SA X}
X_{n+1} &= F (\theta_n ; X_{n}, W_{n+1}), \\
\label{eq:SA theta}
\theta_{n+1} &= \theta_{n} + {\frac 1 {n+1}} H (\theta_n ; X_{n}).
\end{align}
We note in passing that computability is critical in  applications such as stochastic adaptive control or adaptive input design, see \cite{gerencser2024notes}.

An early version of the above problem and the associated algorithm is presented in \cite{ljung1977analysis}, in which $(X_n(\theta))$ is assumed to be defined via a linear stochastic system driven by a weakly dependent process, exhibiting certain weak forms of stationarity.

The parameter estimate is often forced to stay in a compact domain using resetting, see \cite{benveniste1990} or for a more recent paper \cite{Borkar_et_al2023}. For the convergence analysis of the above algorithm a standard approach is the ODE (ordinary differential equation) method, see  \cite{ljung1983theory, benveniste1990, kushner2003applications}, in which the discrete sequence $(\theta_n)$ is approximated by the solution trajectory $y_t$ of the ODE, for any $m$,  
\begin{equation}
\label{eq:The ODE}
\dot y_t = {\frac 1 t}  \mathbb E \, [\, H (y_t ; X_m(y_t))\,]. 
\end{equation}

If the ODE has an asymptotically stable equilibrium point at $\theta^\ast$ with a reasonable domain of attraction then we may expect that $y_t$ will track $\theta_n$ at $t=n.$ In order to capture the tracking error $|\theta_n-y_n|$ on finite intervals,  following a sequence of intricate arguments, see \cite[Part II, Chapters 1 and 2]{benveniste1990}, we arrive at the problem of estimating the additive functional 
\vspace{-1mm} 
\begin{equation}
\label{eq:Additive Functional on Markov}
\sum_{n=1}^{N} \hspace{-0.3mm}\left(H(\theta ; X_n(\theta)) - \E_{\mu_\theta}\hspace{-0.2mm} \tb{\big[ } H(\theta ; X_n(\theta)) \tb{\big] } \right)\hspace{-0.4mm},
\end{equation}
where $\mu_\theta$ is the assumed unique invariant measure of $X_n(\theta)$ under $\theta$.
A well-known device in the theory of Markov processes is to express  \eqref{eq:Additive Functional on Markov} using a Markovian version of the Newton-Leibniz formula of basic calculus by representing the individual terms via the solution of the Poisson equation 
\begin{equation}
(I-\P_\theta^\ast)\hspace{0.5mm} u_\theta(x) \,=\,  H (\theta ; x) - 
\E_{\mu_\theta}\hspace{-0.2mm} \tb{\big[ } H(\theta ; X(\theta)) \tb{\big] }. 
\end{equation}
Here $u_\theta(x)$ is an unknown function, playing the role of a primitive function, and $\P_\theta^\ast $, the adjoint of $\P_\theta,$ is given by \eqref{eq:Pstar} below. Thus \eqref{eq:Additive Functional on Markov} will become the sum of martingale differences. 
For a historical perspective on the topic see the early paper   
\cite{schweitzer1968perturbation}.

In the ODE analysis proposed in \cite[Part II, Chapter 2]{benveniste1990} a vital technical tool is the verification of the Lipschitz continuity of $u_\theta(x)$ w.r.t.\ $\theta$. This requires substantial technical efforts, and the conditions under which useful results (such as \cite[Part II, Chapter 2, Theorem 6]{benveniste1990}) are derived are quite demanding, see the final note below at the end of Section \ref{sec:Lipschitz_Continuity_of_the_Kernel}. 

The motivation and focus 
of the present paper is to provide a significantly simpler set of conditions, with Assumption \ref{cond:Lipschitz of P theta Dirac x by Miklos} below playing a central role, under which the existence, uniqueness and Lipschitz continuity of the solution of parameter-dependent Poisson equations can be conveniently established.

The methodology of our investigation is based on a powerful result in the stability theory for Markov chains, developed in \cite{hairer2011yet}. A major observation of \cite{hairer2011yet} is that by tilting standard conditions of mainstream stability theory for Markov chains, see \cite{meyn2012markov}, the transition kernels prove to be contractions in the space of {differences of probability measures in a suitable metric, see Proposition \ref{prop:Contraction of sigma beta}.}
We briefly present and interpret the main results of \cite{hairer2011yet} in a self-contained manner in Section \ref{sec:Introduction to HM}.
Application of this mathematical technology to the subject matter of the paper results in a transparent and flexible analysis. 

The prospective advantage of using the methodology of Hairer and Mattingly, \cite{hairer2011yet} is that (uniform) contractivity of the probability transition kernels for $\theta \in \Theta$ implies the stability of the inhomogeneous Markov-chain given by \eqref{eq:SA X} with arbitrary $\theta_n \in \Theta.$ Ensuring some kind of stability of \eqref{eq:SA X} is in fact a key issue in SA, see \cite{benveniste1990}.

The recent interest in stochastic approximation in a Markovian framework is also reflected in \cite{JMLR:v26:24-0100} with a focus on the ODE method. However, in contrast to the general theory of \cite{benveniste1990} the Markov chain, driving the SA iteration, does not depend on the parameter. Therefore,
it bypasses the particular technical issue settled in this paper.  

The same limited scope has been chosen in the early versions of \cite{Borkar_et_al_arXiv_2021_2024}. In this latest version, representing a major extension of the results in prior versions,  which has since been recently published \cite{Borkar_et_al2023}, the authors have considered SA processes driven by Markov chains depending on a parameter $\theta \in \mathbf R^d$, partially following the logic of \cite{benveniste1990}. In particular, in analogy with \cite{PoissonCDC2019}, they proved Lipschitz continuity of the solution of parameter-dependent Poisson equations within the context of mainstream stability theory for Markov chains \cite{meyn2012markov}, see their Proposition 7. However, geometric ergodicity and its variants do not imply contractivity of the probability transition kernels in any of the metrics used in  \cite{meyn2012markov}. Hence proving stability of the inhomogeneous Markov-chain given by \eqref{eq:SA X} with arbitrary $\theta_n \in \mathbf R^d,$ without further enforced restrictions on the dynamics of $\theta_n,$ as in \cite{benveniste1990}, requires substantial technical efforts.

The structure of the paper is as follows: in Section \ref{sec:Introduction to HM} we provide a brief introduction to the stability theory for Markov chains developed in \cite{hairer2011yet}, with appropriate interpretations and eventual simplifications. In Section \ref{sec:Poisson Equation - Existence UniquenessMain results} fundamental properties of the Poisson equation, existence and uniqueness of solutions, are discussed. In Section \ref{sec:Lipschitz_Continuity_of_the_Kernel} a simple condition imposing the Lipschitz continuity of the kernel is introduced, and its implications are discussed. The first main result of the paper is stated in Section \ref{sec:Poisson Equation - Lipschitz Continuity}, as Theorem \ref{thm:Poisson Lipschitz}, stating  the Lipschitz continuity of the solutions of a parameter-dependent Poisson equation under reasonable conditions.

In Section \ref{sec:Relaxations of the uniform drift condition} the cited results of \cite{hairer2011yet} are restated under relaxed conditions, 
in particular, imposing conditions on some power of the kernel, $\P_\theta^r,$ reflecting the demarcation between contractivity and stability of ordinary matrices.
In Section \ref{sec:Poisson Equations under Relaxed Conditions} we restate the results of  Section \ref{sec:Poisson Equation - Lipschitz Continuity} under relaxed conditions, leading to our second main result, Theorem \ref{th:Poisson Lipschitz w Uniform cond on Pr}. In Section \ref{sec:Design of Quees} the viability of our results is demonstrated on the modification of a classical textbook example, the design of a simple queuing system with open-loop control, a preliminary version of which has been presented in \cite{PoissonCDC2019}.
The paper is concluded with a brief discussion on potential future research directions.

Given the technical nature of the paper, pre-determined by the subject matter,
we have chosen a semi-classical structure to enhance readability: relevant concepts, novel theorems and their structured proofs are presented in the main body of the paper, whereas the details of the proofs of lemmas and corollaries are given in the Appendix.

{
\section{On a Theorem of Hairer and Mattingly}
\label{sec:Introduction to HM}
}

{In this section we provide a brief summary of an intricate addition to mainstream stability theory for Markov Chains, developed in \cite{hairer2011yet}.} Consider a family of Markov chains $(X_n(\theta)), ~ 0 \le  n < + \infty$ with arbitrary state space $\mathbf X$ equipped with a $\sigma$-field $\mathcal{A}$ of events, and $\Theta$ being an open set of a normed space. Therefore, we consider the $\theta$-dependent transition probability kernels $\P_\theta(x,A)$, with $x \in \mathbf X, \, A \in \cal A$, a shorthand for the conditional probability $P(X_1(\theta) \in A \!\mid\! X_0(\theta)=x)$. We will assume that 
for each $A \in \mathcal{A}$, $\P_\cdot(\cdot,A)$ is $(x,\theta)$-measurable. These assumptions can always be satisfied if $\mathbf X$ is a Polish space and $\Theta$ is a subset of a separable Banach space. 

\medskip\noindent
For any probability measure $\mu$ over $(\mathbf X, \cal A)$ and measurable 
$\varphi : {\mathbf X} \rightarrow \mathbb{R}$ 
define
\begin{equation}
\label{eq:Pstar}
\begin{aligned}
(\P_\theta \mu)(A) &\!:=\!\!\int_{\mathbf X}\!P_\theta(x,A)\mu(\mathrm{d}x)  , \\[1mm]
(P_\theta^\ast \varphi)(x) &\!:=\!\!\int_{\mathbf X}\!\varphi(y) \P_\theta(x,\mathrm{d}y) = \E_{\theta} \big[\, \varphi(X_{1}) \mid X_0 = x\,\big],
\end{aligned}
\end{equation}
assuming the second integral exists. 
Hence, $(\P_\theta \mu)(.)$ is the probability measure of $X_1(\theta)$ assuming that the probability measure of $X_0(\theta)$ is $\mu,$ and $(P_\theta^\ast \varphi)(x)$ is the conditional expectation  $\E_{\theta} \big[\, \varphi(X_{1}) \mid X_0  = x\,\big].$ The next condition is motivated by \cite{hairer2011yet}, stated there for a single Markov-chain.

\begin{assumption}[Uniform Drift Condition for $P_{\theta}$]
\label{cond:Uniform Drift Condition}
There exists a measurable function $V:\mathbf{X}\rightarrow [0,\infty)$ and constants $\gamma\in(0,1)$ and $K\geq0$ such that for all $x\in{\mathbf{X}}$ and $\theta\in\Theta$
\begin{equation}
(P_\theta^\ast V)(x)\,\leq\, \gamma V(x)+K.
\label{eq:DriftCondition}
\end{equation}
\end{assumption}

$V(x)$ is called a Lyapunov function. Note that $V(x), \gamma$ and $K$ are not $\theta$-dependent. Let us take a measure $\mu$ such that 
\begin{equation}
\label{eq:mu V mu X less infty}
 \mu(V) := \int_{\mathbf X} V(x) \mu(\mathrm{d}x) <\infty \quad {\rm and} \quad \mu(\mathbf X) < \infty.   
\end{equation}
Then, integrating \eqref{eq:DriftCondition} with respect to $\mu$ we get for all $\theta \in \Theta:$
\begin{equation}
\P_{\theta} \mu (V)\, \leq\, \gamma \mu (V) + K \mu(\mathbf X).
\label{eq:Uniform Drift Condition w mu}
\end{equation}
Condition \eqref{eq:mu V mu X less infty} is often expressed as $\mu(1+\beta V) < \infty$ with some (and therefore any) $\beta >0.$ Inequality \eqref{eq:Uniform Drift Condition w mu} extends for any signed measure $\eta,$ with $|\eta|(1+\beta V) < \infty$:  
\begin{equation}
|\P_{\theta} \eta| (V)\, \leq\, \gamma |\eta| (V) + K |\eta|(\mathbf X),
\label{eq:Uniform Drift Condition w eta}
\end{equation}
for all $\theta \in \Theta,$ due to the inequality $|P_{\theta}\eta| \leq P_{\theta} |\eta|$. The set of signed measures $\eta$ with $|\eta|(1+\beta V) < \infty$ is denoted by ${\cal M}_{V}.$

Note that the Lyapunov function $V(\cdot)$ can be fairly general, as opposed to \cite{benveniste1990}. In particular, we may use  
$V(x) = e^{cx},$ known to be the right choice for 
queuing systems \cite[Section 16.4]{meyn2012markov}.
The next condition is a natural extension of Assumption 2 of \cite{hairer2011yet} for a parametric family of Markov chains, which itself is a modification of a standard condition in the stability theory of Markov chains \cite{meyn2012markov} requiring minorization on what is called a small set.

\begin{assumption}[Local Minorization]
\label{cond:Local Minorization}
Let $R > 2K/(1\!-\!\gamma)$, where  $\gamma$ and $K$ are the constants from Assumption \ref{cond:Uniform Drift Condition}, and define the set ${\cal C}=\{x\in{\mathbf X} :  V(x) \leq R \}$. There exist a probability measure $\bar{\mu}$ on $\mathbf X$ and a constant $\bar{\alpha}\in(0,1)$ such that, for all $\theta\in\Theta$, all $x\in{\cal C}$, and all measurable $A$,
\begin{equation}
\P_\theta(x,A)\,\geq\,\bar{\alpha}\bar{\mu}(A).
\end{equation}
\end{assumption}

\begin{remark}[Interpretation of $R$]
\label{rem:int V wrt mu star}
If there exists an invariant measure $\mu^\ast_\theta$ such that $\int_{\mathbf X} V(x) \mu^\ast_\theta ( \mathrm{d}x)<\infty,$ then integrating both sides of inequality \eqref{eq:DriftCondition}, we infer that  
\begin{equation}
\int_{\mathbf X} V(x) \mu^\ast_\theta ( \mathrm{d}x)\,\leq\, \frac{K}{1-\gamma}.
\label{eq:VdmuBound}
\end{equation}
Thus, the parameter $R$ in Assumption \ref{cond:Local Minorization} must exceed twice the mean of $V$ w.r.t.\ any of the invariant measures.
\end{remark}

\begin{remark}[Constant shifts of $V$] 
\label{rem:Constant Shifts of V} 
We can and will assume that $\inf_x V(x)=0$ without loss of generality. In fact, if a function $V$ satisfies  Assumptions \ref{cond:Uniform Drift Condition} and \ref{cond:Local Minorization}, then  $V' := V - \inf_x V(x)$ also satisfies  Assumptions \ref{cond:Uniform Drift Condition} and \ref{cond:Local Minorization} with the same contraction coefficient $\gamma,$ the same minorization domain $\mathcal{C}$ and the same $\overline \alpha$, with appropriately chosen parameters $K^\prime$ and $R^\prime.$
\end{remark}

A key technique of \cite{hairer2011yet} used for the stability analysis of Markov processes is the use of what is called the weighted total variation distance between two probability measures:  
\begin{definition}
\label{def:rhoBetaAsATotalVariation}
Let $\mu_1$ and $\mu_2$ be two probability measures on $\mathbf X$ and $\beta >0$. Define the weighted total variation distance as
\begin{equation}
\rho_\beta(\mu_1,\mu_2):=\int_{\mathbf X} (1+\beta V(x))|\mu_1-\mu_2|(\mathrm{d}x),
\end{equation}
where $|\mu_1-\mu_2|$ is the total variation measure of $(\mu_1-\mu_2)$.
\end{definition}

For $\beta  = 0$ the weighted total variation distance would become just the standard total variation distance $|\mu_1-\mu_2|_{\rm TV}.$ 

Alternatively, writing $\eta = \mu_1 - \mu_2,$ we can define 
\begin{equation}
\label{def:rhobetaeta}
\rho_\beta(\eta):=\int_{\mathbf X} (1+\beta V(x))|\eta|(\mathrm{d}x).
\end{equation}

Note that the above definition extends to any signed measure $\eta$ such that  $\int_{\mathbf X} (1+\beta V(x)) |\eta| (\mathrm{d}x)  < \infty,$ leading to what will be called the weighted total variation norm of $\eta.$

An equivalent definition of $\rho_\beta$ can be given by allowing more general weighting functions $\varphi$ : ${\mathbf X} \rightarrow \mathbb{R}$ replacing ${1+\beta  V(x)}.$ To be more specific, let us introduce the norm:

\begin{definition} For any measurable function $\varphi$ : ${\mathbf X} \rightarrow \mathbb{R}$, set 
\label{def:DoubleNorm}
\begin{equation}
\|\varphi\|_\beta:=\sup_{x}\frac{|\varphi(x)|}{1+\beta  V(x)}.
\label{eq:DoubleNorm}
\end{equation}
\end{definition}

The linear space spanned by the functions such that $\|\varphi\|_\beta < \infty$ will be denoted by ${\cal L}_{V}$. Note that  ${\cal L}_{V}$ is neither affected by constant shift of $V$, nor the choice of $\beta$; moreover, ${\cal L}_V$ with the norm $\|\cdot\|_\beta$  becomes a Banach space for any $\beta>0.$ 

In possession of the norm $\|\cdot\|_\beta$ an equivalent definition of the weighted total variation norm can be obtained as follows:

\begin{equation}
\rho_\beta(\eta) := \sup_{\varphi:\|\varphi\|_\beta\leq 1} \int_{\mathbf X} \varphi(x)\eta (\mathrm{d}x).
\label{eq:RhoBetaDefwithDoubleNormSingleVar}
\end{equation}

Weighted total variation norms have been also used in the classic book \cite{meyn2012markov}, introduced in Chapter 14. Conditions for geometric convergence of $\P^n(x,.)$ to the unique invariant measure, interpreted via weighted total variation norms, are given in Theorem 16.0.1, see Remark \ref{rem:HM vs MT} for details. 
However, the smart choice of the weighting factor $\beta,$ ensuring the contractivity of $P_\theta^\ast$, showing up in Proposition \ref{prop:Beta Norm Contraction by P}, and the simplicity of the conditions of \cite{hairer2011yet} are new.

To capture the smoothing effect of $P_\theta^\ast$ acting on ${\cal L}_V,$  define a measure of oscillation for functions $\varphi \in {\cal L}_V $ as follows:
\begin{definition} 
\label{def:Triple Norm}
For any function $\varphi \in {\cal L}_V $ set
\begin{equation}
\label{eq:Triple Norm}
\vertiii{\varphi}_\beta=\min_{c\in\mathbb{R}}\|\varphi+c\|_\beta.
\end{equation}
\end{definition}

It is readily seen that $\|\varphi+c\|_\beta$ is continuous in $c$, and unbounded when $c$ tends to $\pm \infty,$ hence the right hand side of \eqref{eq:Triple Norm} is well-defined. Obviously, $\vertiii{\varphi}_\beta \leq \|\varphi\|_\beta$. 

It is easily seen that $ \vertiii{\cdot}_\beta$ is a semi-norm on ${\cal L}_V$ and $ \vertiii{\varphi}_\beta = 0$ if and only if $\varphi$ is a constant function. Letting ${\mathbb R}_X$ denote the linear vector-space of constant functions on $\mathbf X$ it follows that 
	$ \vertiii{\cdot}_\beta$ is a norm on the linear factor-space ${\cal L}_{V,0} : = {\cal L}_V/{\mathbb R}_X.$ It is also easily seen that  ${\cal L}_{V,0}$ becomes a Banach space with the norm $ \vertiii{\cdot}_\beta.$ In what follows, ${\cal L}_{V,0}$ will denote the latter Banach space with a specific, fixed  choice of $\beta$ to be described in Proposition \ref{prop:Beta Norm Contraction by P}. We note that the above definition of $\vertiii{\varphi}_\beta$ is a 
    simplification of what is given in \cite{hairer2011yet}.

A useful linear subspace of the dual space ${\cal L}_{V,0}^\ast$ is obtained by considering the linear space of signed measures $\eta$ such that 
\begin{equation}
\int_{\mathbf X} (1+\beta V(x)) |\eta|(\mathrm{d}x) < \infty, \quad\text{and}\quad \eta(\mathbf X) = 0,
\end{equation}
which will be denoted by ${\cal M}_{V}^0.$ It is easily seen that
\begin{equation}
\varphi\;\; \tb{\xmapsto{\;\;\;}}\, \int_{\mathbf X}\varphi(x) \eta(\mathrm{d}x) 	
\end{equation}
is a continuous linear functional the dual norm of which is 
\begin{equation}
\sigma_\beta(\eta):=\sup_{\varphi:\vertiii{\varphi}_\beta\leq 1} \int_{\mathbf X} \varphi(x)\eta (\mathrm{d}x).
\label{eq:sigma beta def with eta}
\end{equation}

\begin{proposition} [cf.\ \cite{hairer2011yet}] 
\label{prop:sigma beta eq rho beta}
For any $\eta \in {\cal M}_{V}^0$ and any $\beta>0$
\begin{equation}
\sigma_\beta(\eta) = \rho_\beta(\eta).
\label{eq:sigma beta eq rho beta}
\end{equation}
\end{proposition}
The dual approach in defining the same norm proved to be and will prove to be extremely useful. 

For the sake of clarity we briefly recapitulate the 
argument leading to \eqref{eq:sigma beta eq rho beta}.  Obviously, the set $\{\varphi: \|\varphi\|_\beta\leq 1\}$ is a subset of $\{\varphi: \vertiii{\varphi}_\beta \leq 1\},$ hence for any signed measure $\eta  \in \mathcal{M}_V^0$:
\vspace{-1mm}
\begin{align}
\sup_{\varphi:\vertiii{\varphi}_\beta\leq 1} \int_{\mathbf X} \varphi(x)\eta(\mathrm{d}x) 
\ge \sup_{\varphi:\|\varphi\|_\beta\leq 1} \int_{\mathbf X} \varphi(x)\eta(\mathrm{d}x). 
\label{eq:sigma beta eq rho beta proof}
\end{align}

On the other hand, for any fixed $\varphi$ such that $\vertiii{\varphi}_\beta\leq 1$ there exists a $c$ such that $\|\varphi + c\|_\beta\leq 1.$ But 
\begin{align}
\int_{\mathbf X} \varphi(x)\eta(\mathrm{d}x) = 
\int_{\mathbf X} (\varphi(x) + c) \eta(\mathrm{d}x). 
\end{align}

\noindent
Hence a strict inequality in \eqref{eq:sigma beta eq rho beta proof} cannot occur. \qed

The norm $\sigma_\beta(.)$ defined for signed measures $\eta \in {\cal M}_{V}^0$ naturally leads to the following definition of a metric:

\begin{definition} 
\label{def:sigma beta for pairs} Let $\mu_1, \mu_2$ be two possibly signed measures on $\mathbf X$ such that $\int_{\mathbf X} (1+\beta V(x)) |\mu_i|(\mathrm{d}x) < \infty$ for $i=1,2,$ moreover $\mu_1(\mathbf X) = \mu_2(\mathbf X).$ Then, we define the distance
\begin{equation}
\sigma_\beta(\mu_1,\mu_2):=\sup_{\varphi:\vertiii{\varphi}_\beta\leq 1} \int_{\mathbf X} \varphi(x)(\mu_1-\mu_2)(\mathrm{d}x).
\label{eq:sigma beta def}
\end{equation}
\end{definition}

It is readily seen that $\sigma_\beta(\mu_1,\mu_2)$ is a metric in the space of probability measures.
\noindent
A simple corollary of Proposition \ref{prop:sigma beta eq rho beta} is

\begin{corollary} 
	\label{cor:sigma beta eq rho beta for pairs} Let $\mu_1, \mu_2$ be two possibly signed measures on $\mathbf X$ as in Definition \ref{def:sigma beta for pairs}. Then
	\begin{equation}
	\sigma_\beta(\mu_1,\mu_2)\,=\,\rho_\beta(\mu_1,\mu_2).
\label{eq:sigma beta eq rho beta for pairs}
	\end{equation}
    \end{corollary}

Now we are in a position to summarize the main results of \cite{hairer2011yet}. It is well-known that the kernels $\P_\theta$ acting on probability measures are non-expansive in total variation distance:

\begin{equation}
| \P_\theta \mu_1 - \P_\theta \mu_2 |_{\rm TV} \le |  \mu_1 - \mu_2 |_{\rm TV}.   
\end{equation}
A major contribution of \cite{hairer2011yet} is the result stating that the kernels $\P_\theta$ are actually contractions in weighted total variation distance by choosing $\beta < \overline \alpha /K$, under simple conditions, see Proposition \ref{prop:Contraction of sigma beta} below. The path to proving this result is to establish first that the operator $\P_\theta^\ast$ acting on the Banach space ${\cal L}_{V,0}$ is a contraction, as stated in  
\cite[Theorem 3.1]{hairer2011yet}:

\begin{proposition}
\label{prop:Beta Norm Contraction by P}
Under Assumptions \ref{cond:Uniform Drift Condition} and \ref{cond:Local Minorization}, there is $\beta>0$ and $\alpha\in(0,1)$ such that for all $\theta$ and $\varphi \in {\cal L}_V$ 
\begin{equation}
\label{eq:Beta Norm Contraction by P}
\vertiii{\P^\ast_\theta \varphi}_\beta \,\leq\, \alpha\vertiii{\varphi}_\beta.
\end{equation}
The pairs $(\beta, \alpha)$ can be chosen as follows: take $\alpha_0 \in (0, \bar{\alpha})$ and $\gamma_0 \in (\gamma+2K / R, 1),$ and then set 
$$
\beta = \alpha_0/K \quad {\rm and } \quad \alpha = (1-(\bar{\alpha} - \alpha_0)) \vee (2+R \beta \gamma_0)/(2+R\beta).
$$
\end{proposition}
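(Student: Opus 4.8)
The plan is to \emph{dualize}: instead of estimating the seminorm $\vertiii{\P_\theta^\ast\varphi}_\beta$ of the transformed function directly, I would reduce the contraction \eqref{eq:Beta Norm Contraction by P} to a one-step contraction of the transition kernel acting on Dirac measures, measured in $\rho_\beta$. For $x\neq y$ the numerator in Definition~\ref{def:TripleNorm} is
\[
(\P_\theta^\ast\varphi)(x)-(\P_\theta^\ast\varphi)(y)=\int\varphi(z)\,\big(\P_\theta(x,\mathrm dz)-\P_\theta(y,\mathrm dz)\big),
\]
and the signed measure $\eta_{x,y}:=\P_\theta(x,\cdot)-\P_\theta(y,\cdot)$ has zero total mass with $|\eta_{x,y}|(V)<\infty$ by the drift condition. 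Hence the dual description of $\sigma_\beta=\rho_\beta$ in Remark~\ref{rem:DefMetricsViaTV} and Corollary~\ref{cor:rho beta eq sigma beta} gives $|(\P_\theta^\ast\varphi)(x)-(\P_\theta^\ast\varphi)(y)|\le \vertiii{\varphi}_\beta\,\rho_\beta(\P_\theta(x,\cdot),\P_\theta(y,\cdot))$. Dividing by $d_\beta(x,y)=\rho_\beta(\delta_x,\delta_y)$ and taking the supremum, it suffices to prove, uniformly in $\theta$, the kernel-level estimate $\rho_\beta(\P_\theta(x,\cdot),\P_\theta(y,\cdot))\le\alpha\,d_\beta(x,y)$ for all $x\neq y$, with the stated $\alpha$.

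The single workhorse inequality I would use in both regimes is the elementary bound $\rho_\beta(\mu_1,\mu_2)\le 2+\beta\big(\mu_1(V)+\mu_2(V)\big)$, which follows from Definition~\ref{def:rhoBetaAsATotalVariation} by splitting the total-variation mass (at most $2$) from the $V$-weight (at most $\mu_1(V)+\mu_2(V)$). I would then split according to the value of $s:=V(x)+V(y)$ relative to the radius $R$ of the small set $\cal C$.

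In the \emph{drift regime} $s\ge R$, applying the workhorse bound to the images $\P_\theta(x,\cdot),\P_\theta(y,\cdot)$ and then \eqref{eq:DriftCondition} gives $\rho_\beta\le 2+\beta(\gamma s+2K)$. Since $s\ge R$ forces $2K\le(2K/R)s$, one has $\gamma s+2K\le(\gamma+2K/R)s\le\gamma_0 s$, and the ratio $(2+\beta\gamma_0 s)/(2+\beta s)$ is decreasing in $s$, hence maximal at $s=R$, producing the factor $(2+R\beta\gamma_0)/(2+R\beta)$. In the \emph{minorization regime} $s<R$ one has $x,y\in\cal C$, so Assumption~\ref{cond:Local Minorization} lets me write $\P_\theta(x,\cdot)=\bar\alpha\bar\mu+(1-\bar\alpha)Q_x$ with $Q_x$ a probability measure, and likewise for $y$; the common part $\bar\alpha\bar\mu$ cancels in the difference, so $\rho_\beta(\P_\theta(x,\cdot),\P_\theta(y,\cdot))=(1-\bar\alpha)\rho_\beta(Q_x,Q_y)$. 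Applying the workhorse bound to $Q_x,Q_y$ and using $(1-\bar\alpha)Q_x(V)=(\P_\theta^\ast V)(x)-\bar\alpha\bar\mu(V)\le\gamma V(x)+K$ yields $\rho_\beta\le 2(1-\bar\alpha)+\beta(\gamma s+2K)$. The fine-tuned choice $\beta=\alpha_0/K$ then turns $2\beta K$ into $2\alpha_0$, collapsing the constant term to $2\big(1-(\bar\alpha-\alpha_0)\big)$, so the ratio becomes $\big(2(1-(\bar\alpha-\alpha_0))+\beta\gamma s\big)/(2+\beta s)$.

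It remains to glue the two regime factors into the claimed maximum. Writing $c:=1-(\bar\alpha-\alpha_0)$, the minorization ratio equals $\gamma+2(c-\gamma)/(2+\beta s)$, which is monotone in $s$: if $c\ge\gamma$ it decreases and is bounded by its value $c$ at $s=0$, whereas if $c<\gamma$ it increases on $[0,R)$ and, since $\gamma<\gamma_0$ and $c<1$, stays below the drift factor $(2+R\beta\gamma_0)/(2+R\beta)$. Either way every ratio is at most $\alpha=(1-(\bar\alpha-\alpha_0))\vee(2+R\beta\gamma_0)/(2+R\beta)$, and $\alpha\in(0,1)$ precisely because $\alpha_0<\bar\alpha$ and $\gamma_0<1$, the latter being admissible thanks to $R>2K/(1-\gamma)$. \textbf{Main obstacle.} The only genuinely delicate point is the bookkeeping of constants: arranging the minorization and the drift to be glued through a \emph{single} $\beta$, so that the two regime-dependent factors combine into exactly the stated maximum, and checking the monotonicity that reduces the estimate over all $s$ to the two boundary evaluations. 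The duality reduction and the total-variation split are routine by comparison.
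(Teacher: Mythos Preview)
The paper does not prove this proposition; it is quoted as ``a fundamental result of \cite{hairer2011yet}'' and used as a black box. Your argument is correct and is precisely the Hairer--Mattingly proof: the duality reduction to the kernel-level estimate $\rho_\beta(\P_\theta\delta_x,\P_\theta\delta_y)\le\alpha\,d_\beta(x,y)$, the split into the drift regime $V(x)+V(y)\ge R$ and the minorization regime $V(x)+V(y)<R$, the cancellation of the common component $\bar\alpha\bar\mu$ in the latter, and the gluing via the monotonicity-in-$s$ analysis all match their Theorem~3.1. Your bookkeeping of the constants, including the role of $\beta=\alpha_0/K$ in collapsing $2\beta K$ into $2\alpha_0$ and the case distinction $c\gtrless\gamma$ in the final monotonicity check, is accurate.
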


\begin{remark}
\label{rem:alpha Majorates gamma} 
Although there is a freedom in choosing $\alpha_0$ and $\gamma_0,$ the provable contraction coefficient $\alpha$, ensured by Proposition \ref{prop:Beta Norm Contraction by P}, can be easily shown to satisfy $\alpha > \gamma, $ i.e. not surprisingly, $\alpha$ is strictly larger than the contraction coefficient $\gamma$ in the drift condition, Assumption \ref{cond:Uniform Drift Condition}. 
\end{remark}

Proposition \ref{prop:Beta Norm Contraction by P} can be restated as saying that $P_\theta^\ast$ is a contraction on the Banach space ${\cal L}_{V,0}.$ But then its adjoint operator $P_\theta$, having the same norm, is also a contraction. Thus we get the what is essentially stated in \cite[Theorem 1.3]{hairer2011yet}: 

\begin{proposition}
\label{prop:Contraction of sigma beta}
Under the assumptions of Proposition \ref{prop:Beta Norm Contraction by P} there exist $\beta>0$ and $\alpha\in(0,1)$,  such that for all $\theta$, and any signed measure $\eta \in {\cal M}_{V}^0$ we have
\begin{equation}
\label{eq:Contraction of sigma beta with eta}
\sigma_\beta(\P_\theta \eta)\,\leq\, \alpha \sigma_\beta(\eta).
\end{equation}
Alternatively, let $\mu_1, \mu_2$ be two possibly signed measures on $\mathbf X$ as in Definition \ref{def:sigma beta for pairs}.
Then, we have
\begin{equation}
\label{eq:Contraction of sigma beta DEL}
\sigma_\beta(\P_\theta \mu_1,\P_\theta \mu_2)\,\leq\, \alpha \sigma_\beta( \mu_1, \mu_2).
\end{equation}
\end{proposition}

In what follows, $\beta$ and $\alpha$ are chosen as indicated in Proposition \ref{prop:Beta Norm Contraction by P}. Using standard arguments one can easily show the following proposition, also stated in \cite[Theorem 3.2]{hairer2011yet}:

\begin{proposition} 
	\label{prop:Invariant Measure}
	Under Assumptions \ref{cond:Uniform Drift Condition} and \ref{cond:Local Minorization} for all $\theta$ there is a unique  probability measure $\mu^\ast_\theta$ on $\mathbf X$ such that $\mu^\ast_\theta (V) = \int_{\mathbf X} V(x) \, \mu^\ast_\theta (\mathrm{d}x) <\infty$ and $\P_\theta \,\mu^\ast_\theta = \mu^\ast_\theta.$ 
\end{proposition}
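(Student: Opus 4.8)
The plan is to establish existence and uniqueness separately, both leveraging the contraction property of Proposition \ref{prop:Contraction of sigma beta}, which makes $\P_\theta$ a strict contraction on the space of probability measures with finite $V$-integral under the metric $\sigma_\beta$. For uniqueness, the argument is immediate: suppose $\mu^\ast_\theta$ and $\nu^\ast_\theta$ were two invariant measures with finite $V$-integral. Then $\sigma_\beta(\mu^\ast_\theta, \nu^\ast_\theta) = \sigma_\beta(\P_\theta \mu^\ast_\theta, \P_\theta \nu^\ast_\theta) \le \alpha\, \sigma_\beta(\mu^\ast_\theta, \nu^\ast_\theta)$ with $\alpha < 1$, forcing $\sigma_\beta(\mu^\ast_\theta, \nu^\ast_\theta) = 0$ and hence $\mu^\ast_\theta = \nu^\ast_\theta$, since $\sigma_\beta$ is a genuine metric (by Corollary \ref{cor:rho beta eq sigma beta} it equals $\rho_\beta$, which vanishes only when the measures agree).

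For existence, I would run the standard Banach-fixed-point-style argument. First I would fix an arbitrary starting point $x_0 \in \bold X$ and consider the sequence of iterates $\mu_n := \P_\theta^n \delta_{x_0}$, the $n$-step distributions started from $\delta_{x_0}$. Each $\mu_n$ has finite $V$-integral: this follows by iterating Remark \ref{rem:Drift Condition w mu}, giving $\mu_n(V) \le \gamma^n V(x_0) + K/(1-\gamma)$, a bound uniform in $n$. Thus all iterates live in the set $\mathcal{M}_V$ of probability measures with finite (indeed uniformly bounded) $V$-integral, on which $\sigma_\beta$ is finite.

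Next I would show $(\mu_n)$ is Cauchy in $\sigma_\beta$. By the contraction estimate applied repeatedly, $\sigma_\beta(\mu_{n+1}, \mu_n) = \sigma_\beta(\P_\theta^n \mu_1, \P_\theta^n \mu_0) \le \alpha^n \sigma_\beta(\mu_1, \mu_0)$, and since $\mu_0 = \delta_{x_0}$ and $\mu_1 = \P_\theta \delta_{x_0}$ both have finite $V$-integral, $\sigma_\beta(\mu_1, \mu_0)$ is finite. A geometric-series bound then shows $\sigma_\beta(\mu_{n+m}, \mu_n) \le \alpha^n \sigma_\beta(\mu_1,\mu_0)/(1-\alpha) \to 0$, so the sequence is Cauchy. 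The main obstacle, and the step requiring the most care, is the completeness of the metric space $(\mathcal{M}_V, \sigma_\beta)$: one must argue that the Cauchy sequence actually converges to a limit $\mu^\ast_\theta$ that is itself a probability measure with finite $V$-integral. I would establish this by noting that $\sigma_\beta$-convergence dominates total-variation convergence (since $\rho_\beta \ge \|\cdot\|_{\mathrm{TV}}$ because $1+\beta V \ge 1$), so the $\mu_n$ converge in total variation to some probability measure $\mu^\ast_\theta$; the finiteness of $\mu^\ast_\theta(V)$ and the convergence of the $V$-weighted part follow from the uniform bound $\mu_n(V) \le V(x_0) + K/(1-\gamma)$ together with Fatou's lemma, and from the Cauchy property transferring to the limit.

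Finally I would verify invariance. Since $\P_\theta$ is continuous with respect to $\sigma_\beta$ (it is a contraction, hence Lipschitz), passing to the limit in $\mu_{n+1} = \P_\theta \mu_n$ yields $\mu^\ast_\theta = \P_\theta \mu^\ast_\theta$. The finite-$V$-integral property was secured in the completeness step, so $\mu^\ast_\theta$ is the desired invariant measure, and uniqueness was already shown. I expect the completeness argument to be the only genuinely technical point; the contraction and fixed-point machinery is otherwise routine once Proposition \ref{prop:Contraction of sigma beta} is in hand.
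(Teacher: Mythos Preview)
Your argument is correct and is exactly the ``standard argument'' the paper alludes to: the paper does not give its own proof of Proposition~\ref{prop:Invariant Measure} but simply cites \cite{hairer2011yet}, Theorem~3.2, where precisely this Banach--fixed--point reasoning from the $\sigma_\beta$-contraction (Proposition~\ref{prop:Contraction of sigma beta}) is carried out. Your identification of completeness of $(\mathcal{M}_V,\sigma_\beta)$ as the only nontrivial step is accurate, and your sketch (TV-limit plus a truncation/Fatou bound on $\mu^\ast_\theta(V)$) is the standard way to handle it.
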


\begin{remark}
\label{rem:HM vs MT}
A mirror image of Proposition \ref{prop:Beta Norm Contraction by P} given within \cite[Theorem 14.0.1]{meyn2012markov}, is geometric ergodicity, restated as\vspace{-2mm}
\begin{equation*}
\sup_{\| \varphi \|_\beta \le 1} \left( \mathbf E \, [\varphi (X_n) \, | X_0 = x] - \mathbf E_{\mu^\ast} \varphi (x)  \right) / (1 + \beta V(x)) \le C \alpha^n .  
\end{equation*}
It is readily seen that $\vertiii {\P^{\ast n}  \, \varphi}_\beta  \le C \alpha^n \| \varphi\|_\beta$ follows, but this is much weaker than Proposition \ref{prop:Beta Norm Contraction by P}. 
The conditions of \cite{meyn2012markov} are also much different by assuming that the Markov-chain is  $\psi$-irreducible and aperiodic. The drift condition is supplemented by a local minorization condition on a ``small set'' $\cal C$ defined in terms of an irreducibility measure $\psi$ so that $\psi({\cal C}) >0.$

\end{remark}

\section{Existence and Uniqueness of the Solution of a Poisson Equation}
\label{sec:Poisson Equation - Existence UniquenessMain results}

In what follows, we consider the Poisson equations, depending on the parameter $\theta \in \Theta$, 
\begin{equation}
\label{eq:Poisson theta}
(I-P_\theta^\ast)u_\theta(x) = f_\theta(x) - h_\theta,
\end{equation}
where 
$f_\theta :\mathbf X \to \mathbb{R}$ is the input data, $h_\theta=\mu^\ast_\theta(f_\theta),$ and $u_{\theta}:\mathbf X \to \mathbb{R}$ is the sought-after solution.

First, we prove the existence and the uniqueness (up to an additive constant) of the solution for a fixed $\theta$, adapting standard arguments, then we formulate smoothness conditions on the kernel $P_\theta^\ast$, and the right hand side, $f_\theta$. Using these conditions, we prove Lipschitz continuity w.r.t. $\theta$ in the norm $\|.\|_{\beta}$ of the particular solution $u_\theta$ for which $\mu^\ast_\theta(u_\theta)=0$. For a start, let $\theta \in \Theta$ be fixed.

\begin{theorem} 
\label{th:Poisson Eq Existence}
Let $\theta \in \Theta$ be fixed. Let $\P=\P_\theta$ be such that Assumptions \ref{cond:Uniform Drift Condition} and \ref{cond:Local Minorization} hold. Let $\beta > 0$ be as given in Proposition \ref{prop:Beta Norm Contraction by P}, and  
let $\mu^\ast$ denote the unique invariant probability measure of $\P.$ Let $f: {\mathbf X} \rightarrow \mathbb{R}$ be a measurable function 
such that $ \vertiii{f}_\beta<\infty$, and let $h=\mu^\ast(f).$ Then, the Poisson equation
\begin{equation}
\label{eq:Poisson eq def}
(I-\Ps)u(x) = f(x) - h
\end{equation}
has a unique solution $u(\cdot)$ up to an additive constant. The particular solution for which $\mu^\ast(u) = 0$ can be written as
\begin{equation}
u(x) = \sum_{n=0}^{\infty} (P^{\ast n}  f(x) - h),
\label{eq:Poisson Solution u as Inf Sum}
\end{equation}
where the right hand side is absolutely convergent, and
\begin{equation}
|u(x)|\,\leq\,  \vertiii{f}_\beta \hspace{0.4mm} K_u (1 + \beta V(x)), 
\label{eq:Poisson eq ux upper bound}
\end{equation}
for some constant $K_{u}>0$ depending only on the constants appearing in Assumptions \ref{cond:Uniform Drift Condition}, \ref{cond:Local Minorization}, given by 
\begin{equation*}
K_u := \frac{1}{1-\alpha} \left( 2 + \beta \frac{K}{1-\gamma} \right).
\end{equation*}
It also follows that $\|u\|_\beta \,\leq\,  K_{u} \vertiii{f}_\beta < \infty.$
\end{theorem}

\begin{proof}
It is immediate to check that \eqref{eq:Poisson eq def} is formally satisfied by $u$. 
We show that $u$ is well-defined. First, 
consider any function $\varphi$ such that $\vertiii{\varphi}_\beta\leq 1$. By the definition of the metric $\sigma_\beta$, see \eqref{eq:sigma beta def}, the inequality 
\begin{equation}
\left|\int_{\mathbf X} \varphi(x)(\mu_1-\mu_2)(\mathrm{d}x) \right| \leq \sigma_\beta(\mu_1,\mu_2) 
\end{equation}
holds true for any pair of probability measures $\mu_1,\mu_2,$ or even for any pair of signed measures $\mu_1, \mu_2$ as in Definition \ref{def:sigma beta for pairs}. On the other hand, any generic function $\varphi$ can be rescaled by $\frac{1}{\vertiii{\varphi}_\beta}$, so that we also have
\begin{equation}
\left| \int_{\mathbf X} \varphi(x)(\mu_1-\mu_2)(\mathrm{d}x) \right| \leq  \vertiii{\varphi}_\beta  \sigma_\beta(\mu_1,\mu_2).
\label{eq:Heart Ineq with Triple Norm}
\end{equation} 

To estimate 
the $n$\,th term of the right hand side of \eqref{eq:Poisson Solution u as Inf Sum},  
consider the equalities
\begin{eqnarray}
\frac{1}{\vertiii{f}_\beta} |P^{\ast n} f(x)-\mu^\ast(f)| = 
\frac{1}{\vertiii{f}_\beta} | (\P^n \delta_x- \mu^\ast)(f)|\nonumber\\
= \frac{1}{\vertiii{f}_\beta} \left| \int_{\mathbf X} f(y)(\P^n \delta_x-\P^n \mu^\ast)(\mathrm{d}y)\right|.
\label{eq:P*n f- mu* f}
\end{eqnarray}

Using 
\eqref{eq:Heart Ineq with Triple Norm}, we can bound the right hand side by
$\sigma_\beta(\P^n\delta_x,\P^n\mu^\ast)$.
Now applying Proposition \ref{prop:Contraction of sigma beta} and taking into account Corollary 
\ref{cor:sigma beta eq rho beta for pairs}, we can further bound it by
\begin{align}
\!\!\!\sigma_\beta(\P^n\delta_x,\P^n\mu^\ast) &\leq  \alpha^n\sigma_\beta(\delta_x,\mu^\ast)\nonumber\\
& =  \alpha^n\!\!\!\!\! \sup_{\varphi:\|\varphi\|_\beta\leq 1} \int_{\mathbf X} \varphi(x) (\delta_x-\mu^\ast)(\mathrm{d}x).
\end{align}
Take into account the trivial estimate
\begin{equation}
\int_{\mathbf X} \varphi(x) (\delta_x-\mu^\ast)(\mathrm{d}x)\leq  \int_{\mathbf X} |\varphi(x)| (\delta_x+\mu^\ast)(\mathrm{d}x),
\label{eq:varphi (delta x - mu *}
\end{equation}
and note that $\|\varphi\|_\beta\leq 1$ implies 
$|\varphi(x)|\leq 1+\beta V(x)$ for all $x$ .  
Putting together \eqref{eq:P*n f- mu* f} - \eqref{eq:varphi (delta x - mu *} with the fact that 
$\int_{\mathbf X} (1+\beta V(x)) (\delta_x+\mu^\ast)(\mathrm{d}x)=2+\beta V(x)+\beta \mu^\ast(V)$, we conclude:
\begin{equation}
\frac{1}{\vertiii{f}_\beta} |P^{\ast n} f(x)-\mu^\ast(f)| \leq  \alpha^n (2+\beta V(x) + \beta \mu^\ast(V)).
\label{eq:bound1ofPf-m}
\end{equation}

It follows that the series $\sum_{n=0}^{\infty} (P^{\ast n}  f(x) - h)$ is absolutely convergent, so $u(x)$ is well-defined
and satisfies the desired upper bound. 
Indeed, $(\Ps u)(x)$ can be written as
\begin{equation}
\int_{\mathbf X} P(x, \dd y) u(y) = \int_{\mathbf X} P(x, \dd y) \sum_{n=0}^{\infty} (P^{\ast n}  f(y) - h), 
\end{equation}
where the integration and the summation can be interchanged due to the Lebesgue dominated convergence theorem, the conditions of which are ensured by \eqref{eq:bound1ofPf-m}. Thus, we get 
\vspace{-1mm}
\begin{equation}
(\Ps u)(x) = \sum_{n=1}^{\infty} (P^{\ast n}  f(x) - h) = u(x) - (f(x) - h), 
\end{equation}
which implies the claim. Using similar arguments, and Fubini's theorem as in \eqref{eq:Fubini for mu P phi} of the Appendix, we get that
\begin{equation}
\int_{\mathbf X} u(x) \mu^\ast (\mathrm{d}x) = 0.
\end{equation}
To prove uniqueness, assume that there are two solutions $u_1$ and $u_2$, and define $\Delta u=u_2-u_1$. Then, $(I-\Ps)\Delta u=0$, implying $\Ps\Delta u= \Delta u$, from which $\vertiii{\Ps\Delta u}_\beta=\vertiii{\Delta u}_\beta$. But, by Proposition \ref{prop:Beta Norm Contraction by P}, it holds that $\vertiii{\Ps\Delta u}_\beta\leq \alpha \vertiii{\Delta u}_\beta,$ and hence    $\vertiii{\Delta u}_\beta=0$. Therefore, $\Delta u$ is a constant.

Summing the inequalities  \eqref{eq:bound1ofPf-m} over $n$ and using \eqref{eq:VdmuBound} we get
\begin{equation}
|u(x)|\leq \frac{\vertiii{f}_\beta}{1-\alpha} \left(2 +\beta V(x) +  \beta \frac{K}{1-\gamma}\right),
\end{equation}
from which the claim of the theorem follows after trivial arithmetics.
\end{proof}

\section{Lipschitz Continuity of the Kernel}
\label{sec:Lipschitz_Continuity_of_the_Kernel}

Now we consider a parametric family of kernels $(\P_{\theta})$.
A critical point in the discussion to follow is to define appropriate smoothness conditions for them in the context of \cite{hairer2011yet}.

\smallskip
\begin{assumption} [Lipschitz Continuity of $P_\theta$]
\label{cond:Lipschitz of P theta Dirac x by Miklos}
There exists a constant $L_P$ such that for every $\theta,\theta^{\prime}\in \Theta$ and all $x \in \mathbf X:$ 
\begin{equation}
\sigma_\beta(\P_{\theta}\delta_x,\P_{\theta^{\prime}}\delta_x)\,\leq\, L_P|\theta-\theta^{\prime}|(1+\beta V(x)).
 \label{eq:Miklos Lipschitz Dirac x}
 \end{equation} 
\end{assumption}
This assumption can be rewritten in the equivalent form: for any $f \in \cal L_V$ we have, with $\beta >0$ as in Proposition 2, 
\begin{equation*}
\label{eq:Miklos Lipschitz Dirac x with P star}
\vert \P_\theta \delta_x \, (f)- \P_{\theta^\prime} \delta_x (f) \vert \le L_P \| f \|_\beta \vert \theta - \theta^\prime \vert (1 + \beta V(x)).
\end{equation*}

An inequality analogous to \eqref{eq:Miklos Lipschitz Dirac x} with a  general measure $\mu$ replacing $\delta_x$ is established in the following lemma:

\begin{lemma}
\label{lemma:P theta Lipschitz with mu}
Let Assumption \ref{cond:Lipschitz of P theta Dirac x by Miklos} be satisfied, and suppose that Assumption \ref{cond:Uniform Drift Condition} holds, without requiring $\gamma < 1.$ Let $\mu$ be a measure such that $\mu(1+\beta V) < \infty.$ Then for every $\theta,\theta^{\prime}\in\Theta:$
\begin{equation}
\sigma_\beta(\P_{\theta}\mu,\P_{\theta^{\prime}}\mu)\,\leq\, L_P|\theta-\theta^{\prime}|\mu(1+\beta V).
\label{eq:P theta Lipschitz with mu}
\end{equation}
\end{lemma}

The starting point of the proof is the observation that 
Assumption \ref{cond:Lipschitz of P theta Dirac x by Miklos} implies that for all $\varphi$ such that $\|\varphi\|_{\beta} \leq 1$, implying also $\vertiii{\varphi}_{\beta} \leq 1$, we have 
\begin{equation}
\int_{\mathbf X} \varphi(y) \left(\P_{\theta}(x,\dd y) - P_{\theta^{\prime}}(x,\dd y)\right) \leq L_P|\theta-\theta^{\prime}|(1+\beta V(x)).
\label{eq:Miklos Lipschitz Dirac x Applied to varphi}
\end{equation}
Integrating this inequality with respect to $\mu(\mathrm{d}x)$ the right hand side of \eqref{eq:Miklos Lipschitz Dirac x Applied to varphi} becomes the right hand side of \eqref{eq:P theta Lipschitz with mu}. For the integral of the left hand side we apply Fubini's theorem to get the claim of the lemma. Details will be given in the Appendix.

The relaxed version of Assumption \ref{cond:Uniform Drift Condition} not requiring $\gamma < 1$ will be referred to as {\it uniform one step growth condition}. It is analogous to {Assumption A'\!.5, (i') on page 290} in \cite{benveniste1990}, and will play a dominant role in Section \ref{sec:Relaxations of the uniform drift condition} below. It is readily seen that the corollaries of Assumption \ref{cond:Uniform Drift Condition} given as inequalities \eqref{eq:Uniform Drift Condition w mu} and \eqref{eq:Uniform Drift Condition w eta} remain valid.

Lemma \ref{lemma:P theta Lipschitz with mu} readily extends to signed measures:

\begin{lemma}
\label{lemma:P theta Lipschitz wrt signed eta}
Let the conditions of Lemma \ref{lemma:P theta Lipschitz with mu} be satisfied and let 
$\eta$ be a signed measure such that  $|\eta|(1+\beta V) < \infty.$ Then:
\begin{equation}
\sigma_\beta(\P_{\theta}\eta,\P_{\theta^{\prime}}\eta)\,\leq\, L_P|\theta-\theta^{\prime}| |\eta|(1+\beta V).
\label{eq:MiklosLipscitzSigned}
\end{equation}
\end{lemma}

The proof is based on using the Hahn-Jordan decomposition $\eta = \eta^+ - \eta^-$, where $\eta^+$ and $\eta^-$ are non-negative measures with disjoint supports. Details will be given in the Appendix. 
The previous results culminate in what follows, stating the Lipschitz continuity of $\P^n_{\theta}$ when acting on signed measures: 
\begin{lemma}
\label{lemma:Lipschitz for Pn with eta}
Let $(\P_\theta)$ satisfy Assumptions  \ref{cond:Uniform Drift Condition} and 
\ref{cond:Local Minorization}. Let $\beta >0$ be such that  Proposition \ref{prop:Contraction of sigma beta} holds. 
Let Assumption \ref{cond:Lipschitz of P theta Dirac x by Miklos}, requiring the Lipschitz continuity of $(\P_\theta),$ hold with the above $\beta.$ Then for any signed measure $\eta$ with $|\eta|(1+\beta V) < \infty$ and $\theta,\theta^{\prime}\in\Theta$:
\begin{equation}
\label{eq:Lipschitz for Pn with any mu}
\sigma_\beta(\P^n_{\theta}\eta,\P^n_{\theta^{\prime}}\eta)\, \leq\, L_P |\theta-\theta^{\prime}|  \cdot
C_P |\eta| ( 1 + \beta V),
\end{equation}
where $C_P$ is independent of $\theta$, $\theta'$ and $\eta$, and is given by
\begin{equation}
\label{eq:LPprime}
C_P := \frac{1}{\-1-\alpha}\left(1 + \beta \frac{K}{1-\gamma}\right) \vee \frac{1}{\alpha-\gamma}.
\end{equation}
\end{lemma}

The proof is based on the telescopic sum decomposition: 
\begin{align}
\label{eq:P1n vs P2n Telecopic}
(\P^n_{\theta} - \P^n_{\theta^{\prime}}) = \sum_{k=0}^{n-1}( \P^{n-k}_{\theta}\P^{k}_{\theta^{\prime}} - \P^{n-k-1}_{\theta}\P^{k+1}_{\theta^{\prime}}  ).
\end{align}

For the $k$-th term we use the contraction property of 
$\P^{n-k-1}_{\theta}$, Proposition \ref{prop:Contraction of sigma beta}, and estimate $\sigma_\beta( \P_{\theta}\P^{k}_{\theta^{\prime}} \eta - \P_{\theta^{\prime}}\P^{k}_{\theta^{\prime}}\eta  )$ from above using the Lipschitz-continuity of the kernels $\P_{\theta}$ as defined in Assumption \ref{cond:Lipschitz of P theta Dirac x by Miklos}. We will also need the observation that iterating the drift condition given in the form \eqref{eq:Uniform Drift Condition w eta} we get  
\begin{align}
|\P_\theta^k\eta|(V) \le \gamma^k |\eta|(V)+\frac{K}{1-\gamma} |\eta|(\mathbf X),
\label{eq:Pk mu V upper bound V0}
\end{align}
restated in \eqref{eq:Pk mu V upper bound} with details given in the Appendix.

Applying Lemma \ref{lemma:Lipschitz for Pn with eta} for $\eta = \delta_x$, we get:
\begin{equation}
\sigma_\beta(\P^n_{\theta}\delta_x,\P^n_{\theta^{\prime}}\delta_x) \leq L_P |\theta-\theta^{\prime}|  \cdot
C_P  ( 1 + \beta V(x)).
\end{equation}
Letting $n \rightarrow \infty,$ and taking into account Proposition \ref{prop:Contraction of sigma beta}, we get 
the Lipschitz continuity of the invariant measure 
w.r.t.\ $\theta$:

\begin{corollary}
\label{corr:Lipschitz for mu stars}
Under the assumptions of Lemma \ref{lemma:Lipschitz for Pn with eta}, we get
\begin{equation}
\label{eq:Lipschitz for mu stars}	
\sigma_\beta(\mu_{\theta}^\ast,\mu_{\theta^{\prime}}^\ast) \,\leq\, L_P |\theta-\theta^{\prime}| {\cdot C_P}.
\end{equation}
The constant $C_P$ can be replaced by the smaller constant 
\begin{equation}
C'_P:=\frac{1}{\-1-\alpha}\left(1 + \beta \frac{K}{1-\gamma}\right).
\end{equation}
\end{corollary}
\noindent
Details of the proof will be given in the Appendix. 

A surprising variant of the above lemma is the following:

\begin{lemma}
\label{lemma:Lipschitz for Pn etaX 0} 
Under the conditions of Lemma \ref{lemma:Lipschitz for Pn with eta} for every signed measure $\eta \in \mathcal{M}_V^0$, implying 
$\eta(\mathbf X) = 0$, and $\theta,\theta^{\prime}\in\Theta$, 
we have 
	\begin{equation}
	\label{eq:MiklosLipschitzPn etaX 0}
	\sigma_\beta(\P^n_{\theta}\eta,\P^n_{\theta^{\prime}}\eta)\, \leq \,
	L_P|\theta-\theta^{\prime}| n \alpha^{n-1}\, |\eta|(1+ \beta V).
	\end{equation}
\end{lemma}
Equivalently, we can write: for any $f \in \cal L_V,$ we have
\begin{equation*}
\label{eq:MiklosLipschitzPn etaX 0}
\vert P^n_{\theta}\eta \, (f) - \P^n_{\theta^{\prime}}\eta\, (f) \vert \leq \,
L_P \| f \|_\beta \,|\theta-\theta^{\prime}| n \alpha^{n-1}\, |\eta|(1+ \beta V).
\end{equation*}

The starting point of the proof is the same telescopic decomposition \eqref{eq:P1n vs P2n Telecopic}, applied to $\eta.$ The novelty w.r.t. to the proof of Lemma \ref{lemma:Lipschitz for Pn with eta} is the observation that $\eta(\mathbf X) = 0$ implies that $\P^{k}_{\theta^{\prime}} \eta$ converges exponentially fast to the zero measure, see Proposition \ref{prop:Contraction of sigma beta}. 
Details will be given in the Appendix.

A final note: a key technical result of \cite{benveniste1990} on the Lipschitz-continuity of the solutions of the Poisson equation, stated as Theorem 6, p.\, 262, assumes smoothness of the kernels in a way, which is in most aspects significantly more restrictive than our Assumption \ref{cond:Lipschitz of P theta Dirac x by Miklos}. 
In particular, highlighting the main features of their key assumptions (iii) and (iv), modulated to the context and notations of our paper, would be as follows.
Let $\mathbf X : = \mathbb {R}^k$ be a Euclidean space, and let $V(x) = |x|^{p}$ with $p >0.$ Letting $\beta=1$ write $\|{g} \|_{1} =:\|{g} \|_{V}.$ Let $g(\cdot)$ be differentiable, and let ${g}^\prime(\cdot)$ denote its gradient. Then the modulated version of assumption  (iii) would read: for any pair $\theta,\, \theta^\prime \in \Theta$ and any $x \in \mathbb {R}^k$ we have 
\begin{equation*}
	\label{eq:Assump iii}
	\vert P^n_{\theta}\delta_x \, (g) - \P^n_{\theta^{\prime}}\delta_x\, (g) \vert \leq \,
	L_P (\|{g} \|_{V}+ \|{g}^\prime \|_{V}) \,|\theta-\theta^{\prime}| \, (1+ V(x)).
	\end{equation*}
To modulate (and relax) assumption (iv) let $\eta \in {\cal M}_V^0$ and let the test functions $g(\cdot)$ be twice  differentiable. Then impose: 
\begin{equation*}
	\left| \P^{n}_{\theta} \eta \, (g) - \P^{n}_{\theta'} \eta \, (g) \right| \le L_p\, \rho^n \, \|{g} \|_{3, V} \,  |\, \theta - \theta'| \, | \eta | (1 +  V ),
  \end{equation*}
  with $0 <\rho < 1$ and $\|{g} \|_{3, V}  = (\|{g} \|_{V}+ \|{g}^\prime \|_{V} + \|{g}^{\prime \prime} \|_{V}) .$ Note in passing that assumption (iv) in \cite{benveniste1990} is formulated in terms of measures of the form $\eta = \delta_{x_1} - \delta_{x_2},$ however it can be readily extended to measures $\eta \in {\cal M}_V^0$ via Choquet's theorem. 
  
  The strength of these assumptions, or those of \cite{benveniste1990}, is that the set of test functions are strictly smaller than ${\cal L}_V$, needed for our Assumption \ref{cond:Lipschitz of P theta Dirac x by Miklos}.  On the other hand, in \cite{benveniste1990} the distances of measures $\P_\theta^n \eta$ and $\P_{\theta^\prime}^n \eta$, in any suitable metric, are not estimated. More grievously, both assumptions of \cite{benveniste1990} assume some kind of a priori stability of the kernels $\P^{n}_{\theta},$ while similar inequalities are actually proven in our Lemmas \ref{lemma:Lipschitz for Pn with eta} and \ref{lemma:Lipschitz for Pn etaX 0} under practically attractive assumptions.

\section{Lipschitz Continuity of the  Solution of the Poisson Equation}
\label{sec:Poisson Equation - Lipschitz Continuity}

In this section we revisit the family of Poisson equations, depending on the parameter $\theta \in \Theta$, defined under \eqref{eq:Poisson theta}. In addition to the Lipschitz-continuity of the kernel $P_\theta^\ast$, given in Assumption \ref{cond:Lipschitz of P theta Dirac x by Miklos}, we need to 
formulate the Lipschitz-continuity of the right hand side, $(f_{\theta})$ with $\theta \in \Theta$, as well.

\begin{assumption}
\label{cond:f theta beta Lipschitz} [Lipschitz Continuity of $f_\theta$]
We have
$K_f := \sup_{\theta\in\Theta}\vertiii{f_\theta}_\beta<\infty$, and
there exists a constant $L_f$ such that, for all $\theta,\theta'$, it holds that 
\begin{equation}
\|f_\theta - f_{\theta'}\|_\beta \, \leq\, L_f |\theta-\theta'|.
\label{eq:f theta beta Lipschitz}
\end{equation}
\end{assumption}

The main result of the present paper is that under the above conditions the particular solution $u_\theta$, for which $\mu^\ast_\theta(u_\theta)=0$, is Lipschitz continuous w.r.t. $\theta$ in the norm $\|.\|_{\beta}$:

\begin{theorem} 
\label{thm:Poisson Lipschitz}
Assume that the kernels $(\P_\theta)$ satisfy Assumptions \ref{cond:Uniform Drift Condition}, \ref{cond:Local Minorization} and \ref{cond:Lipschitz of P theta Dirac x by Miklos}.  Let us fix $\beta > 0$ as given in Proposition \ref{prop:Beta Norm Contraction by P}. Let $(f_\theta)$ be a family of  ${\mathbf X} \rightarrow \mathbb{R}$ measurable functions such that Assumption \ref{cond:f theta beta Lipschitz} holds with the above $\beta$. Let $\mu_\theta^\ast$ denote the unique invariant probability measure of $\P_\theta$ and let $h_\theta=\mu_\theta^\ast(f_\theta).$ Consider the  Poisson equations
\begin{equation}
\label{eq:Poisson eq with theta}
(I-P_\theta^\ast)u_\theta(x) = f_\theta(x) - h_\theta.
\end{equation}
Then, $h_\theta$ is Lipschitz continuous in $\theta$:
\begin{equation}
|h_\theta-h_{\theta'}|\,\leq\, L_h |\theta-\theta'|, \\
\label{eq:Poisson h theta psi Lipschitz}
\end{equation}
and the particular solution, given in Theorem \ref{th:Poisson Eq Existence} by \eqref{eq:Poisson Solution u as Inf Sum} as $
				u_\theta(x)=\sum_{n=0}^{\infty} (P_\theta^{\ast n}  f_\theta(x) - h_\theta)$ 					
				is Lipschitz continuous in $\theta$:
\begin{equation}
|u_\theta(x)-u_{\theta'}(x)|\,\leq\, L_u |\theta - \theta^{\prime}| \left( 1  + \beta V(x)   \right) ,
\end{equation}
where $L_{u}$ is independent of $x$. Alternatively, we can write 
\begin{equation}
\|u_\theta - u_{\theta'}\|_\beta \,\le\, L_u |\theta-\theta'|.
\end{equation}
Here the constants $L_{h}$ and $L_{u}$ depend only on the constants appearing in Assumptions  \ref{cond:Uniform Drift Condition}, \ref{cond:Local Minorization},  \ref{cond:Lipschitz of P theta Dirac x by Miklos} and \ref{cond:f theta beta Lipschitz}.
\end{theorem}
\begin{proof}
Consider the extended parametric family of Poisson equations, where
$\P^{\ast}$ 
and $f$ are independently parametrized, with the notation $h_{\theta, \psi} = \mu_\theta^{\ast}(f_{\psi}),$
\begin{equation}
\label{eq:Poisson with theta psi}
(I-P_\theta^\ast)u_{\theta, \psi}(x) = f_{\psi}(x) - h_{\theta, \psi},
\end{equation}
{\it Step 1.} First, we prove that $h_{\theta, \psi}$ is Lipschitz continuous in $\theta$ and $\psi.$ Since $h_{\theta} = \mu_\theta^{\ast}(f_{\theta}) = h_{\theta, \theta}$, the Lipschitz continuity of $h_{\theta},$ stated in \eqref{eq:Poisson h theta psi Lipschitz} then follows. We can write
\begin{align} 
\label{eq:Poisson with theta psi h Lipschitz in psi}
|h_{\theta, \psi}-h_{\theta, \psi^{\prime}}| & =&\!\!\!\!  \lim_{n\rightarrow \infty} |{\P}_{\theta}^{\ast n} f_{\psi}(x)-{\P}_{\theta}^{\ast n} f_{\psi^{\prime}}(x)|, \\
\label{eq:Poisson with theta psi h Lipschitz in theta}
|h_{\theta, \psi}-h_{\theta^{\prime}, \psi}| & =&\!\!\!\! \lim_{n\rightarrow \infty} |{\P}_{\theta}^{\ast n} f_{\psi}(x)-{\P}_{\theta^{\prime}}^{\ast n} f_{\psi}(x)|. 
\end{align}
Note that the limits of the right hand side are finite by Assumption \ref{cond:f theta beta Lipschitz} and 
the drift condition Assumption \ref{cond:Uniform Drift Condition}.

We can bound the right hand side of \eqref{eq:Poisson with theta psi h Lipschitz in psi} as follows:
\begin{equation*}
|{\P}_{\theta}^{\ast n} f_{\psi}(x)-{\P}_{\theta}^{\ast n} f_{\psi^{\prime}}(x)| \leq \left({\P}_{\theta}^{\ast n}  |f_{\psi}-f_{\psi^{\prime}}|\right)(x)
\end{equation*}
\begin{equation}
 =  \left({\P}_{\theta}^{n}\delta_x\right) |f_{\psi}-f_{\psi^{\prime}}|
\leq  \left\| f_{\psi}-f_{\psi^{\prime}}\right\|_{\beta} \left({\P}_{\theta}^{n}\delta_x\right)(1+\beta V).
\end{equation}
Using the Lipschitz continuity of $f$, as given by Assumption \ref{cond:f theta beta Lipschitz},
the right hand side can bounded from above by 
	\begin{equation}
    \label{eq:Lips_const_wrt_psi}
		L_f|\psi-\psi^{\prime} | \left({\P}_{\theta}^{n}\delta_x\right) (1+\beta V).
	\end{equation}

Let $n \to \infty$, note that $\vertiii{{\P}_{\theta}^{n}\delta_x-\mu^\ast_{\theta}}_{\beta} \to 0$ by Corollary \ref{cor:sigma beta eq rho beta for pairs}, and thus 
$\|{\P}_{\theta}^{n}\delta_x-\mu^\ast_{\theta}\|_{\beta} \to 0,$  we get for the limit of \eqref{eq:Lips_const_wrt_psi}, yielding an upper bound for the l.h.s. (left hand side) of \eqref{eq:Poisson with theta psi h Lipschitz in psi}, by Remark \ref{rem:int V wrt mu star},
\begin{align}	
		L_f|\psi-\psi^{\prime} | \mu^\ast_{\theta}(1+\beta V)
\leq L_f|\psi-\psi^{\prime}| \left(1+\beta\frac{K}{1-\gamma}\right)\!.
\label{eq:Poisson with theta psi h Lipschitz in psi Details3}
	\end{align}

Now, the l.h.s. of \eqref{eq:Poisson with theta psi h Lipschitz in theta} can be written as and bounded by
\begin{equation}
\label{eq:Poisson with theta psi h Lipschitz in theta Details2}
\left| \int_{\mathbf X} f_{\psi}(x) ({\mu^\ast_\theta} - {\mu^\ast_{\theta'}})(\mathrm{d}x) \right| \leq \vertiii{f_{\psi}}_{\beta} \sigma_\beta(\mu_{\theta}^\ast,\mu_{\theta^{\prime}}^\ast).
\end{equation}
Here $\vertiii{f_{\psi}}_\beta\leq\sup_{\psi\in \Theta}\vertiii{f_{\psi}}_\beta = K_f <\infty$ by Assumption \ref{cond:f theta beta Lipschitz}, and $\sigma_\beta(\mu_{\theta}^\ast,\mu_{\theta^{\prime}}^\ast) \le L_P |\theta-\theta^{\prime}| \cdot C_P $ by Corollary \ref{corr:Lipschitz for mu stars}.

Setting $\psi = \theta, \psi^\prime= \theta^\prime$ in \eqref{eq:Poisson with theta psi h Lipschitz in psi} and $\psi = \theta^\prime$ in \eqref{eq:Poisson with theta psi h Lipschitz in theta}, we get by the triangle inequality \eqref{eq:Poisson h theta psi Lipschitz}:
$|h_{\theta}-h_{\theta^{\prime}}| \leq L_h |\theta - \theta^{\prime}|$, with
\begin{align}
L_h &= L_f \left(1+\beta\frac{K}{1-\gamma}\right) + K_f L_P \frac{1}{\-1-\alpha}\left(1 + \beta \frac{K}{1-\gamma}\right)\nonumber\\
& = \left(L_f + K_f L_P \frac{1}{\-1-\alpha} \right)\left(1 + \beta \frac{K}{1-\gamma}\right).  \label{eq:Poisson Lipschitz constant for h}
\end{align}

\ms
With this Step 1 is completed. 
Next, we consider the Lipschitz continuity of the doubly-parametrized particular solution 
\begin{equation}
u_{\theta, \psi}(x)=\sum_{n=0}^{\infty} (P_\theta ^{\ast n}  f_\psi(x) - h_{\theta, \psi}).
\end{equation}

	{\it Step 2}. We show that $u_{\theta, \psi}(x)$ is Lipschitz continuous w.r.t. $\psi.$ Indeed, we can express $|u_{\theta, \psi}(x) - u_{\theta, \psi^{\prime}} (x)|$ as
	\begin{align}
    \label{eq:Poisson Lipschitz of u wrt 	psi D1}
&	\Big|\sum_{n=0}^{\infty} (P_\theta^{\ast n}  (f_{\psi}(x) - f_{\psi^{\prime}}(x)) - (h_{\theta, \psi} - h_{\theta, \psi^{\prime}})\Big|\nonumber\\ 
=\;& \Big|\sum_{n=0}^{\infty} (\P_\theta^{n} \delta_x  - \mu_\theta^\ast)  (f_{\psi} - f_{\psi^{\prime}}) \Big|.
	\end{align}
	The absolute value of $n$-th term can be written, using 
	\eqref{eq:Heart Ineq with Triple Norm}, as 
	\begin{equation*}
	|(\P_\theta^{n} \delta_x  - \P_\theta^{n} \mu_\theta^\ast) (f_{\psi} - f_{\psi^{\prime}})  | \le  \sigma_\beta(\P_\theta^{n} (\delta_x  - \mu_\theta^\ast))~\vertiii{(f_{\psi} - f_{\psi^{\prime}})}_\beta.
	\end{equation*} 
    \noindent Taking into account Propositions \ref{prop:Contraction of sigma beta} and \ref{prop:sigma beta eq rho beta}, \eqref{def:rhobetaeta},
    and Assumption \ref{cond:f theta beta Lipschitz}, the right hand side can be bounded from above by 
    \begin{align}
	&  \alpha^n \sigma_\beta(\delta_x  - \mu_\theta^\ast) \cdot L_f |\psi - \psi^{\prime}|\nonumber\\
	& \le  \alpha^n (\delta_x  +  \mu_\theta^\ast)(1 + \beta V) \cdot L_f |\psi - \psi^{\prime}|.
    \end{align}
    The right hand side is equal to and can be upper bounded by
    \begin{align}
	& \alpha^n \left(2 + \beta V(x)  +  \beta \mu_\theta^\ast (V) \right)\cdot L_f |\psi - \psi^{\prime}|\nonumber \\
	& \le \alpha^n \left(2 + \beta V(x)  +  \beta \frac{K}{1-\gamma} \right)\cdot L_f |\psi - \psi^{\prime}|.
	\end{align}
    Inserting this into \eqref{eq:Poisson Lipschitz of u wrt psi D1} we get the upper bound
    \begin{align}
	  & |u_{\theta, \psi}(x) - u_{\theta, \psi^{\prime}} (x)| \nonumber \\
    & \leq {\frac 1 {1 - \alpha}} \left(2 + \beta V(x) + \beta \frac{K}{1-\gamma}  \right) L_f |\psi - \psi^{\prime}|.
    \label{eq:Two parameter Poisson Lipschitz of u wrt psi}
    \end{align}

{\it Step 3}. The final, critical point is to show that $u_{\theta, \psi}(x)$ is Lipschitz continuous in $\theta.$  Let us write $u_{\theta, \psi}(x)-u_{\theta^{\prime},\psi}(x)$ as
	\begin{equation}
	\label{eq:Two parameter Poisson Lipschitz of u wrt theta D1}	
	\sum_{n=0}^{\infty} (\P_{\theta}^{\ast n} f_\psi(x)- \mu_\theta^\ast (f_\psi) -  \P_{\theta^{\prime}}^{\ast n} f_\psi(x) +  \mu_{\theta^{\prime}}^\ast (f_\psi) ).
	\end{equation}
	The $n$-th term can be written as 
	\begin{equation}
	\label{eq:f psi Delta n}
	\left( \P_{\theta}^{ n} \delta_x - \P_{\theta}^{ n} \mu_\theta^\ast  -  \P_{\theta^{\prime}}^{ n} \delta_x  +  \P_{\theta^{\prime}}^{ n} \mu_{\theta^\prime}^\ast \right)(f_\psi) .
	\end{equation}
    Let us denote the measure acting on $f_\psi$ by $\Delta_n.$ 
	Adding and subtracting $\P_{\theta^{\prime}}^{ n} \mu_{\theta}^\ast$ within $\Delta_n$, we can write 
    \begin{align}
  	\Delta_n & = [\P_{\theta}^{ n} \left( \delta_x - \mu_\theta^\ast \right)  -  \P_{\theta^{\prime}}^{ n} \left(\delta_x  -  \mu_\theta^\ast \right) ] + 
	[\P_{\theta^{\prime}}^{ n} {\color{black} \left(\mu_{\theta^\prime}^\ast -  \mu_{\theta}^\ast \right)} ] \nonumber \\
    & =: \Delta_{n,1} + \Delta_{n,2} .
    \label{eq:splitting-delta-n}
    \end{align}
With this notation 
\eqref{eq:f psi Delta n}, the $n$-th term of \eqref{eq:Two parameter Poisson Lipschitz of u wrt theta D1}, can be written and bounded from above 
in absolute value, using 
\eqref{eq:Heart Ineq with Triple Norm},
as 
	\begin{equation}
    \label{eq:Delta n acting on f psi le 01}
	| {\Delta_{n}( f_{\psi})}| \le \sigma_\beta (\Delta_{n}) \vertiii{f_{\psi} }_\beta = \sigma_\beta (\Delta_{n,1}+\Delta_{n,2}) \vertiii{f_{\psi} }_\beta.     
	\end{equation}
To bound $\sigma_\beta  (\Delta_{n,1}) $ we can apply Lemma \ref{lemma:Lipschitz for Pn etaX 0} with $\eta = \delta_x - \mu_\theta^\ast :$ 
    \begin{equation}    
    \sigma_\beta (\Delta_{n,1})  \le L_P |\theta-\theta^\prime|n {\alpha^{n-1}} 	|\delta_x - \mu_\theta^\ast|(1 + \beta V).
    \end{equation}
    The right hand side can be trivially upper bounded by
	\begin{align}
    \label{eq:rho beta Delta 1n}
    & L_P |\theta-\theta^\prime| n {\alpha^{n-1}} \left(2 + \beta V(x)  +  \beta \mu_\theta^\ast (V)    \right) \nonumber\\
	\le\; & L_P |\theta-\theta^\prime| n {\alpha^{n-1}} \left(2 + \beta V(x)  +  \beta \frac{K}{1-\gamma} \right).
	\end{align}

To bound $\sigma_\beta  (\Delta_{n,2})$, we refer to  Proposition \ref{prop:Contraction of sigma beta}, yielding\ 
$$\sigma_\beta (\Delta_{n,2}) \le  {\alpha^{n}} \sigma_\beta (\mu_{\theta^\prime}^\ast -  \mu_{\theta}^\ast).$$ 
This can be bounded from above by Corollary \ref{corr:Lipschitz for mu stars}, resulting in  
	\begin{equation}
	\label{eq:sigma beta Delta 2n le}
	\sigma_\beta (\Delta_{n,2}) \le  {\alpha^{n}} L_P ~ \frac{1}{\-1-\alpha}\left(1 + \beta \frac{K}{1-\gamma}\right) |\theta-\theta^\prime|.
	\end{equation}

    Combining \eqref{eq:Delta n acting on f psi le 01} - \eqref{eq:sigma beta Delta 2n le} we get that the $n$-th term of \eqref{eq:Two parameter Poisson Lipschitz of u wrt theta D1}, reformulated as \eqref{eq:f psi Delta n}, can be bounded from above by  
	\begin{align}
	&  L_P~ n {\alpha^{n-1}}\, 
	\left(2 + \beta V(x) +  \beta \frac{K}{1-\gamma}  \right) K_f ~|\theta-\theta^\prime|  \nonumber \\
	&+  L_P~ {\alpha^{n}} ~ \frac{1}{\-1-\alpha}\left(1 + \beta \frac{K}{1-\gamma}\right) K_f ~|\theta-\theta^\prime| 	\label{eq:Delta n f psi le}.
	\end{align}

    Summation over $n,$ in view of \eqref{eq:Two parameter Poisson Lipschitz of u wrt theta D1}, yields the upper bound
	\begin{align}
	\label{eq:Two parameter Poisson Lipschitz of u wrt theta D2}
	& |u_{\theta, \psi}(x) - u_{\theta^{\prime}, \psi} (x)| \nonumber \\
	& \le L_P ~\frac{1}{(1-\alpha)^2} \left(2 + \beta V(x)  +  \beta \frac{K}{1-\gamma}\right) K_f ~|\theta-\theta^\prime| \nonumber\\
	& + L_P  ~\frac{1}{(1-\alpha)^2} \left(1 + \beta \frac{K}{1-\gamma}\right)K_f~|\theta-\theta^\prime|.
	\end{align}
    Combining \eqref{eq:Two parameter Poisson Lipschitz of u wrt theta D2} and \eqref{eq:Two parameter Poisson Lipschitz of u wrt psi} the proof is complete.
	\end{proof}
    \begin{remark}
    \label{remark_LU_calculations}
    The Lipschitz constant $L_u$ can be chosen as
        \begin{equation} 
          \label{eq:Poisson Lipschitz constant for u}
          L_{u} = \,{\frac 1 {1 - \alpha}}\! \left( L_f +  L_P \frac{2}{(1-\alpha)} K_f\right)\! \left(2 +  \beta \frac{K}{1-\gamma}\right)\!.
        \end{equation} 
    The details of this elementary calculation are omitted. 
    \end{remark}

\section{Relaxations of the Conditions}
\label{sec:Relaxations of the uniform drift condition}

In this section we restate the results of \cite{hairer2011yet} cited in Section \ref{sec:Introduction to HM} as Propositions \ref{prop:Beta Norm Contraction by P}--\ref{prop:Invariant Measure} under relaxed conditions, obtained by 
additional arguments. To facilitate reading and to highlight the parallel structure we state them as Propositions \ref{prop:Beta Norm Quasi Contraction by Pr}--\ref{prop:Invariant Measure Pr}.

A key condition of Propositions \ref{prop:Beta Norm Contraction by P} -- \ref{prop:Invariant Measure} is Assumption \ref{cond:Uniform Drift Condition}, requiring the existence of a common Lyapunov function. To illustrate the delicacy of this assumption consider the analogous scenario that a set of $n \times n$ matrices $\{A_{\theta}: \theta \in \Theta \subset \mathbf R^p\},$ such that $A_{\theta}$ is stable for all $\theta \in \Theta ,$ has a common quadratic Lyapunov function $V(x) := x^{\top} Q x$, where $Q$ is a symmetric positive definite matrix. In fact we would require that for some $0< \gamma < 1$ we have $
A_{\theta}^{\top} Q A_{\theta} \leq \gamma$ for all $ \theta \in \Theta$ in the sense of semi-definite ordering. Hence, the matrix $Q$ induces a metric with respect to which $A_{\theta}$ is a contraction, simultaneously for all $\theta$, with the same contraction factor $\gamma$, and thus the family of matrices $\{A_{\theta}: \theta \in \Theta$\} is jointly stable. 

Now, if joint stability fails to hold, but $\{A_\theta: \theta \in \Theta\}$ is a compact set of stable $n \times n$ matrices, then
we can find a positive integer $r$ such that $\Vert A_\theta^r \Vert \le \gamma_r <1$ for all $\theta \in \Theta,$ hence the family of matrices $\{A_\theta^r: \theta \in \Theta\}$ is jointly stable. This analogy   motivates the following relaxation of the drift condition given as Assumption \ref{cond:Uniform Drift Condition}, similar to  Assumption A'\!.5, (i) and (i') on page 290 of \cite{benveniste1990}:

\begin{assumption}[Uniform Drift Condition for $P_\theta^r$]
			\label{cond:Uniform Drift Condition for Pr theta}
			There exists a positive integer $r$, a measurable function $V:\mathbf{X}\rightarrow [0,\infty)$ and constants $\gamma_r\in(0,1)$ and $K_r\geq0$ such that for all  $\theta\in\Theta$ and $x\in{\mathbf{X}},$ we have
			\begin{equation}
			(\P^{*r}_{\theta} V)(x)\,\leq\, \gamma_r V(x)+K_r. 
			\label{eq:Drift Condition Pr}
			\end{equation}
	\end{assumption}			

\begin{assumption}[Uniform One Step Growth Condition for $P_\theta$]
	\label{cond:Uniform One Step Growth Condition for P theta}		With the same measurable function $V:\mathbf{X}\rightarrow [0,\infty)$ as above we have for all $\theta \in \Theta$ and all $x\in{\mathbf{X}}:$
			\begin{equation}
			(\P_{\theta}^{\ast} V)(x)\,\leq\, \gamma_1 V(x)+K_1,
			\label{eq:Uniform One Step Stab Condition}
			\end{equation}
	where we can and will assume that $\gamma_1 > 1$ and $K_1 \ge 0.$
\end{assumption}

Integrating \eqref{eq:Uniform One Step Stab Condition} with respect to any $\mu \in {\cal M}_V$ we get 
\begin{equation}
\P_{\theta} \mu (V)\, \leq\, \gamma_1 \mu (V) + K_1 \mu(\mathbf X),
\label{eq:Uniform One Step Stab Condition w mu}
\end{equation}
for all $\theta \in \Theta$, in exact analogy with \eqref{eq:Uniform Drift Condition w mu}.
The following 
lemma is a relaxed version of Proposition \ref{prop:Beta Norm Contraction by P}: 

\begin{lemma}
\label{lem:One Boundedness of P in 2 and 3 Norm} The uniform one-step growth condition given above implies that for {\it any} $\beta>0,$ for all functions $\varphi \in {\cal L}_V$ and all $\theta\in\Theta,$  with $\alpha^{\prime} = \gamma_1 \vee (1 + \beta K_1),$ we have 
		\begin{equation}
		\label{eq:One Step Stability of P star in 2 and 3 Norm}
		\Vert {\P_{\theta}^{\ast} \varphi} \Vert_\beta \,\leq\, \alpha^{\prime}\Vert {\varphi} \Vert_\beta \quad \text{\it and} \quad 
\vertiii{\P_{\theta}^{\ast} \varphi}_\beta \,\leq\, \alpha^{\prime} \vertiii{\varphi}_\beta.
		\end{equation}
\end{lemma}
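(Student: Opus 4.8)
The plan is to prove the two inequalities in turn, establishing the bound for $\Vert \cdot \Vert_\beta$ first and then leveraging it, together with the translation invariance of the oscillation semi-norm, to obtain the bound for $\vertiii{\cdot}_\beta$. First I would bound $\Vert \P_\theta^\ast \varphi \Vert_\beta$ directly from the definitions. Starting from the representation of $\P_\theta^\ast$ as an integral against the kernel, I write $|(\P_\theta^\ast \varphi)(x)| \le \int |\varphi(y)|\, \P_\theta(x,\mathrm{d}y)$ and insert the elementary estimate $|\varphi(y)| \le \Vert \varphi \Vert_\beta (1+\beta V(y))$, which is immediate from Definition \ref{def:DoubleNorm}. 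Integrating against the kernel and using that $\P_\theta(x,\cdot)$ is a probability measure yields
\begin{equation*}
|(\P_\theta^\ast \varphi)(x)| \le \Vert \varphi \Vert_\beta\left(1 + \beta (\P_\theta^\ast V)(x)\right),
\end{equation*}
and applying the one-step growth condition \eqref{eq:Uniform One Step Stab Condition} gives the further upper bound $\Vert \varphi \Vert_\beta\,(1 + \beta K_1 + \beta \gamma_1 V(x))$.

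The only delicate point is the coefficient matching at this stage. I want to dominate $1 + \beta K_1 + \beta\gamma_1 V(x)$ by $\alpha'(1+\beta V(x))$. Comparing the constant terms requires $1 + \beta K_1 \le \alpha'$, while comparing the coefficients of $V(x)$ requires $\gamma_1 \le \alpha'$; both hold by the very definition $\alpha' = \max(1+\beta K_1, \gamma_1)$, and this is exactly why that particular choice of $\alpha'$ appears. Dividing by $1 + \beta V(x)$ and taking the supremum over $x$ then yields the first inequality, valid for every $\beta > 0$.

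For the second inequality I exploit Lemma \ref{lemma:2.1EquivalentNorm}: I choose $c \in \mathbb{R}$ realizing $\vertiii{\varphi}_\beta = \Vert \varphi + c \Vert_\beta$. Since $\P_\theta^\ast$ maps constants to themselves (again because $\P_\theta(x,\cdot)$ is a probability measure), we have $\P_\theta^\ast(\varphi + c) = \P_\theta^\ast \varphi + c$, and $\vertiii{\cdot}_\beta$ is invariant under the addition of constants. Combining this with the general bound $\vertiii{\cdot}_\beta \le \Vert \cdot \Vert_\beta$ and the first inequality just proved gives
\begin{equation*}
\vertiii{\P_\theta^\ast \varphi}_\beta = \vertiii{\P_\theta^\ast(\varphi+c)}_\beta \le \Vert \P_\theta^\ast(\varphi+c) \Vert_\beta \le \alpha' \Vert \varphi + c \Vert_\beta = \alpha' \vertiii{\varphi}_\beta,
\end{equation*}
which is the claim.

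I do not expect a serious obstacle here: the argument is short and parallels the drift-based contraction estimates used earlier. The single point requiring care is the coefficient matching in the second paragraph, namely checking that $\max(1+\beta K_1,\gamma_1)$ simultaneously absorbs both the additive constant $\beta K_1$ and the linear-in-$V$ growth $\beta\gamma_1 V(x)$ produced by one application of the kernel; everything else is a routine manipulation of the two norms.
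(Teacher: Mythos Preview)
Your proof is correct and follows essentially the same route as the paper: both arguments bound $|(\P_\theta^\ast\varphi)(x)|$ via the pointwise estimate $|\varphi|\le \Vert\varphi\Vert_\beta(1+\beta V)$ combined with the one-step growth condition, then match coefficients to obtain $\alpha'=\max(1+\beta K_1,\gamma_1)$, and for the oscillation semi-norm both invoke Lemma~\ref{lemma:2.1EquivalentNorm} together with the fact that $\P_\theta^\ast$ fixes constants.
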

The first statement is obtained by straightforward calculations, while the second statement follows by using the definition 
$\vertiii{\psi}_\beta = \min_c \| \psi + c \|_\beta.$ For details see the Appendix.
\begin{lemma}
\label{lemma:one step Stab for sigma beta}
Under Assumption \ref{cond:Uniform One Step Growth Condition for P theta}, for any $\beta>0,$ for all $\theta \in \Theta$, the kernel $P_\theta$ is a bounded linear operator on  ${\cal M}_{V}^0$, more exactly, for any $\eta \in {\cal M}_{V}^0$, we have, with $\alpha'$ as in Lemma \ref{lem:One Boundedness of P in 2 and 3 Norm},
\begin{equation}
\label{eq:Contraction of sigma beta with eta CP}
\sigma_\beta(\P_\theta \eta)\,\leq\, \alpha' \sigma_\beta(\eta).
\end{equation}
Alternatively, let $\mu_1,\mu_2$ be two possibly signed measures on $\mathbf X$ as in Definition \ref{def:sigma beta for pairs}. Then we have
	\begin{equation}
	\label{eq:one step Stab for sigma beta}
	\sigma_\beta(\P_{\theta}\mu_1,\P_{\theta}\mu_2)\, \leq\, \alpha^\prime \sigma_\beta(\mu_1,\mu_2).
	\end{equation}
\end{lemma}

\begin{assumption}[Uniform Local Minorization for 
  $P_{\theta}^r$]
  \label{cond:Local Minorization Extended to Pr Uniform}
  Let $R_r>2K_r/(1-\gamma_r),$ where $\gamma_r$ and $K_r$ are the constants from Assumption \ref{cond:Uniform Drift Condition for Pr theta}, and let ${\cal C}_r=\{x\in{\mathbf X} :  V(x) \leq R_r \}$.
  There exist a probability measure $\bar{\mu}_r$ and a constant $\bar{\alpha}_r\in(0,1)$ such that for all $\theta \in \Theta$, $x \in {\cal C}_r$ and measurable $A$ it holds:
  \begin{equation}
  \label{eq:Local Minorization Extended to Pr Uniform}
    \P_\theta^r(x,A)\,\geq\,\bar{\alpha}_r \bar{\mu}_r(A).
  \end{equation}
\end{assumption}

The first main result of \cite{hairer2011yet}, cited as Proposition \ref{prop:Beta Norm Contraction by P} in Section \ref{sec:Introduction to HM}, can be restated as follows:
\begin{proposition}
\label{prop:Beta Norm Quasi Contraction by Pr}
Under Assumptions \ref{cond:Uniform Drift Condition for Pr theta},  \ref{cond:Uniform One Step Growth Condition for P theta} and \ref{cond:Local Minorization Extended to Pr Uniform} there exist constants $\beta >0$, $\alpha  \in(0,1)$ and $C >0$ such that for any $\varphi \in {\cal L}_V,$ all $\theta \in \Theta$ and all $n>0$ we have:
	\begin{equation}
	\vertiii{\P_\theta^{*n} \varphi}_\beta \,\leq\, C \alpha^n \vertiii{\varphi}_\beta.
	\end{equation}
Here we can choose $\beta = \beta_r$ and $\alpha= \alpha_r^{1/r}$, with $\beta_r$ and $\alpha_r$ provided by Proposition \ref{prop:Beta Norm Contraction by P} applied to $\P_\theta^r$, and noting that $0 < \alpha_r <1,$ and $C = \alpha_r^{-1}(\alpha^{\prime})^{r-1},$ with $\alpha'$ as in Lemma \ref{lem:One Boundedness of P in 2 and 3 Norm}.
\end{proposition}

\begin{proof} Let us fix a $\theta \in \Theta$ and write $P = \P_\theta$. By Proposition \ref{prop:Beta Norm Contraction by P} there exist 
$\beta = \beta_r >0,$ and $\alpha_r\in(0,1)$ such that $\vertiii{\P^{\ast r} \varphi}_\beta \leq \alpha_r \vertiii{\varphi}_\beta,$ implying for any positive integer $m$
	\begin{equation}
	\vertiii{\P^{\ast rm} \varphi}_\beta \leq \alpha_r^m \vertiii{\varphi}_\beta. 
	\end{equation}
	For a general positive integer $n$ write $n=rm +k$ with $0 \le k \le r-1$. Then, we  get  
	\begin{equation}
	\vertiii{\P^{\ast n} \varphi}_\beta \leq \alpha_r^m \vertiii{\P^{\ast k}\varphi}_\beta.
	\end{equation}
To complete the proof estimate the last term above  applying the second inequality of \eqref{eq:One Step Stability of P star in 2 and 3 Norm} $k \le r-1$ times to obtain  
\begin{equation}
\vertiii{\P^{\ast n} \varphi}_\beta \leq \alpha_r^m (\alpha^{\prime})^{r-1} \vertiii{\varphi}_\beta. 
\end{equation}
Now $m = (n-k)/r > n/r - 1,$ hence $\alpha_r^m < \alpha_r^{n/r} \alpha_r^{-1},$ thus the claim of the theorem follows.
\end{proof}

Repeating the arguments leading to Proposition \ref{prop:Contraction of sigma beta}, we get:

\begin{proposition}
    	\label{prop:Quasi Contraction of Pr wrt sigma beta} Under Assumptions \ref{cond:Uniform Drift Condition for Pr theta},  \ref{cond:Uniform One Step Growth Condition for P theta} and \ref{cond:Local Minorization Extended to Pr Uniform} there exist constants $\beta = \beta_r >0,$ $\alpha\in(0,1)$ and $C >0$ such that for any signed measure $\eta \in {\cal M}_{V}^0$, all $\theta$ and $n \ge 0$ we have
	\begin{equation}
	\label{eq:Quasi Contraction of Pr with eta wrt sigma beta}
	\sigma_\beta(\P_\theta^n \eta)\,\leq\, C \alpha^n \sigma_\beta(\eta).
	\end{equation}
The constants $\beta = \beta_r,$ $\alpha$ and $C$ are the same as in Proposition \ref{prop:Beta Norm Quasi Contraction by Pr}. Alternatively, let $\mu_1,\mu_2$ be two possibly signed measures on $\mathbf X$ as in Definition \ref{def:sigma beta for pairs}. Then we have
	\begin{equation}
	\label{eq:Quasi Contraction of Pr wrt sigma beta}
	\sigma_\beta(\P_\theta^n \mu_1,\P_\theta^n \mu_2)\,\leq\, C \alpha^n \sigma_\beta( \mu_1, \mu_2).
	\end{equation}
\end{proposition}

Finally, we have the following extension of Proposition \ref{prop:Invariant Measure}:

\begin{proposition} 
	\label{prop:Invariant Measure Pr}
	Under Assumptions \ref{cond:Uniform Drift Condition for Pr theta},  \ref{cond:Uniform One Step Growth Condition for P theta} and \ref{cond:Local Minorization Extended to Pr Uniform} for all $\theta \in \Theta$ there exists a unique probability measure $\mu_\theta^\ast$ on $\mathbf X$ such that $\mu_\theta^\ast(V) < \infty$ 	and $\P_\theta \mu_\theta^\ast= \mu_\theta^\ast.$ Denoting the unique invariant probability measure for $\P_\theta^r$ by 
$\mu_{\theta,r}^{\ast}$ we have $\mu_{\theta}^{\ast}=\mu_{\theta,r}^{\ast}.$
\end{proposition}

\begin{proof}  Let us fix any $\theta \in \Theta$ and write $\P = \P_\theta$, $\mu^\ast = \mu_\theta^\ast$ and $\mu_{r}^{\ast} = \mu_{\theta,r}^{\ast}.$ Thus  $\mu^{\ast}_{r}$ is the unique invariant probability  measure for $\P^r$ the existence of which is ensured by Proposition \ref{prop:Invariant Measure}. Now, we show that for any $k>0$ we have $\int_{\mathbf X} V  \, \mathrm{d} \P^k \mu^{\ast}_{r}: = \int_{\mathbf X} V(x)  \, \P^k \mu^{\ast}_{r} (\mathrm{d} x) < \infty$. Indeed, write:
\begin{equation}
\int_{\mathbf X} V  \, \mathrm{d} \P^k \mu^{\ast}_{r} =  
\int_{\mathbf X} (\P^k)^\ast V \,
 \mathrm{d}  \mu^{\ast}_{r} =
\int_{\mathbf X} (\P^\ast)^k V 
\, \mathrm{d}  \mu^{\ast}_{r}.
\end{equation}
Here the r.h.s. can be bounded from above, using the definition of $\| \cdot \|_{\beta}$ and the first half of \eqref{eq:One Step Stability of P star in 2 and 3 Norm}, by   
\begin{equation}
\int_{\mathbf X} \Vert (\P^\ast)^k V \Vert_\beta (1 + \beta V) \, \mathrm{d}  \mu^{\ast}_{r} \le  
\int_{\mathbf X} (\alpha^\prime)^k (1 + \beta V) 
\, \mathrm{d} \mu^{\ast}_{r},
\end{equation}
which is finite since $\int_{\mathbf X} {V (x)\, \mathrm{d}\mu^{\ast}_{r} (x)} <\infty.$
It follows that the probability measure $\mu^\ast$ defined by
\begin{equation}
\mu^\ast := \frac 1 r (I + \P + \ldots \P^{r-1}) \mu^{\ast}_{r}
\end{equation}
also satisfies $\int_{\mathbf X} {V (x) \, \mathrm{d}\mu^\ast (x)} <\infty,$ and it is readily seen to be invariant for $\P.$ Since any probability measure invariant for $\P$ is also invariant for $\P^r,$ there cannot be measures that are invariant for $\P$ besides $\mu^{\ast}_{r}$, and thus we \tb{have} $\mu^\ast = \mu^{\ast}_{r}$.
\end{proof}

\section{Analysis of the Poisson Equation under Relaxed Conditions}
\label{sec:Poisson Equations under Relaxed Conditions}

In what follows, the main results of Sections \ref{sec:Poisson Equation - Existence UniquenessMain results} and \ref{sec:Poisson Equation - Lipschitz Continuity} will be now extended, with minor modifications, assuming the above relaxed conditions. 
\smallskip

\begin{theorem}
  \label{th:Poisson Eq Existence Pr} 
Let Assumptions \ref{cond:Uniform Drift Condition for Pr theta}, \ref{cond:Uniform One Step Growth Condition for P theta} and \ref{cond:Local Minorization Extended to Pr Uniform} hold. 
Let $\beta = \beta_r > 0$ be as given in Proposition \ref{prop:Beta Norm Quasi Contraction by Pr}. Let $f: {\mathbf X} \rightarrow \mathbb{R}$ be a measurable function 
such that $ \vertiii{f}_\beta<\infty$. Let $\P=\P_\theta$ for some fixed $\theta$, and 
let $\mu^\ast$ denote the unique invariant probability measure of $\P,$ and let $h=\mu^\ast(f).$ 
Then, the Poisson equation
  \begin{equation}
    (I-\Ps)u(x) = f(x) - h
    \label{eq:Poisson CP}
  \end{equation}
  has a unique solution $u(\cdot)$ up to additive constants. The particular solution for which $\mu^\ast(u) = 0$ can be written as
   \begin{equation}
   u(x) = \sum_{n=0}^{\infty} (P^{\ast n}  f(x) - h),
   \label{eq:Poisson Solution u as Inf Sum for Pr}
   \end{equation}
where the right hand side is absolutely convergent, and  
   \begin{equation}
   \label{eq:Poisson Eq Upper Bound for ux with V(x) Pr}
    |u(x)|\,\leq\, \vertiii{f}_{\beta} K_{r,u}(1+ \beta V(x)) 
    \end{equation}
  for some constant $K_{r,u}>0$ depending only on the constants appearing in Assumptions \ref{cond:Uniform Drift Condition for Pr theta}, \ref{cond:Uniform One Step Growth Condition for P theta} and \ref{cond:Local Minorization Extended to Pr Uniform}. It also  follows:
  \begin{equation}
  \|u\|_\beta \,\leq\,  K_{r,u} \vertiii{f}_\beta < \infty.
  \end{equation}  
  \end{theorem}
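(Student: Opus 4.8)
The plan is to follow the structure of the proof of Theorem~\ref{th:Poisson Eq Existence} almost verbatim, replacing every appeal to the exact contraction of Proposition~\ref{prop:Contraction of sigma beta} by the quasi-contraction now available as Theorem~\ref{prop:Quasi Contraction of Pr wrt sigma beta}, namely $\sigma_\beta(\P^n\delta_x,\P^n\mu^\ast)\le C\alpha^n\sigma_\beta(\delta_x,\mu^\ast)$ with $\alpha\in(0,1)$ and a fixed $C>0$. The existence and uniqueness of the invariant measure $\mu^\ast$ with $\mu^\ast(V)<\infty$ is guaranteed by Theorem~\ref{prop:Invariant Measure Pr}, and since $\mu^\ast$ coincides with the invariant measure of $\P^r$, integrating the drift inequality \eqref{eq:Drift Condition Pr} against $\mu^\ast$ yields $\mu^\ast(V)\le K_r/(1-\gamma_r)$, exactly as in \eqref{eq:VdmuBound}.

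Concretely, I would first estimate the $n$-th term of \eqref{eq:Poisson Solution u as Inf Sum for Pr}. Writing $P^{\ast n}f(x)-h=(\P^n\delta_x-\P^n\mu^\ast)(f)$ and combining the ``heart inequality'' \eqref{eq:Heart Ineq with Triple Norm} with Theorem~\ref{prop:Quasi Contraction of Pr wrt sigma beta}, Corollary~\ref{cor:rho beta eq sigma beta}, and the elementary bound $|\delta_x-\mu^\ast|\le\delta_x+\mu^\ast$, I obtain
\begin{equation*}
|P^{\ast n}f(x)-h|\le \vertiii{f}_\beta\,C\alpha^n\,\sigma_\beta(\delta_x,\mu^\ast)\le \vertiii{f}_\beta\,C\alpha^n\Big(2+\beta V(x)+\beta\tfrac{K_r}{1-\gamma_r}\Big).
\end{equation*}
Since $\alpha\in(0,1)$, summing the geometric factor $\sum_{n\ge0}C\alpha^n=C/(1-\alpha)$ shows the series defining $u$ is absolutely convergent and furnishes the bound \eqref{eq:Poisson Eq Upper Bound for ux with V(x) Pr}: using $2+\beta V(x)+\beta K_r/(1-\gamma_r)=(1+\beta V(x))+(1+\beta K_r/(1-\gamma_r))\le(2+\beta K_r/(1-\gamma_r))(1+\beta V(x))$, one may take $K=\tfrac{C}{1-\alpha}\big(2+\beta K_r/(1-\gamma_r)\big)$.

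With $u$ well-defined, checking that it solves \eqref{eq:Poisson CP} is routine: the dominating bound just derived licenses interchanging $\Ps$ with the infinite sum by dominated convergence, giving $\Ps u=u-(f-h)$, and termwise integration yields $\mu^\ast(u)=0$. The genuinely delicate point is uniqueness, and this is where the quasi-contraction constant $C$ forces a change of argument. If $\Delta u$ is the difference of two solutions with $\vertiii{\Delta u}_\beta<\infty$, then $\Ps\Delta u=\Delta u$, hence $\Delta u=P^{\ast n}\Delta u$ for all $n$. Where the original proof deduced $\vertiii{\Delta u}_\beta=0$ from a single contraction $\vertiii{\Ps\Delta u}_\beta\le\alpha\vertiii{\Delta u}_\beta$, I cannot contract in one step here, since $C\alpha$ may exceed $1$; instead I invoke Theorem~\ref{th:Beta Norm Quasi Contraction by Pr} to write $\vertiii{\Delta u}_\beta=\vertiii{P^{\ast n}\Delta u}_\beta\le C\alpha^n\vertiii{\Delta u}_\beta$ and let $n\to\infty$. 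As $C\alpha^n\to0$, this forces $\vertiii{\Delta u}_\beta=0$, so $\Delta u$ is constant. The main obstacle is thus not a new estimate but the loss of one-step contractivity: the single place in the original proof exploiting $\alpha<1$ after one application of $\P$ must be replaced by an iterate-and-pass-to-the-limit argument, which succeeds precisely because $C\alpha^n\to0$.
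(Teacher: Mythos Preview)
Your proof is correct but follows a different route from the paper's. The paper does not work directly with the full series $\sum_{n\ge 0}(P^{\ast n}f-h)$; instead it applies Theorem~\ref{th:Poisson Eq Existence} to the $r$-step kernel $\P^r$, obtaining the solution $v(x)=\sum_{n\ge 0}(P^{\ast nr}f(x)-h)$ of $(I-\P^{\ast r})v=f-h$ together with its bound, and then uses the algebraic factorization $(I-\P^{\ast r})=(I-\P^\ast)(I+\P^\ast+\cdots+\P^{\ast(r-1)})$ to conclude that $u=(I+\P^\ast+\cdots+\P^{\ast(r-1)})v$ solves \eqref{eq:Poisson CP}. The bound on $|u(x)|$ then comes from pushing the finitely many operators $\P^{\ast k}$, $0\le k\le r-1$, through $v$ via the one-step growth condition \eqref{eq:Uniform One Step Stab Condition}, and uniqueness is obtained by noting that $\Ps\Delta u=\Delta u$ implies $\P^{\ast r}\Delta u=\Delta u$, whence Theorem~\ref{th:Poisson Eq Existence} (for $\P^r$) forces $\Delta u$ constant. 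Your approach, by contrast, bypasses the $r$-step reduction entirely and controls every term $P^{\ast n}f-h$ directly via the quasi-contraction Theorems~\ref{th:Beta Norm Quasi Contraction by Pr} and~\ref{prop:Quasi Contraction of Pr wrt sigma beta}. This is cleaner and more faithfully mirrors the proof of Theorem~\ref{th:Poisson Eq Existence}; the paper's route has the small advantage of not invoking Theorems~\ref{th:Beta Norm Quasi Contraction by Pr}--\ref{prop:Quasi Contraction of Pr wrt sigma beta} at all, relying only on the unmodified Theorem~\ref{th:Poisson Eq Existence} plus a finite algebraic identity, but at the cost of a less direct argument for the absolute convergence of the full series \eqref{eq:Poisson Solution u as Inf Sum for Pr}.
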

\begin{proof} Consider the Poisson equation
		\begin{equation}
		\label{eq:Poisson for Pr}
		(I-\P^{*r})v(x) = f(x) - h,
		\end{equation}
		where $h= \mu^{\ast} (f),$ recalling that $\mu^{\ast}_r = \mu^{\ast}.$ In view of Theorem \ref{th:Poisson Eq Existence} it has a unique solution, up to an additive constant. The particular solution with  $\mu^\ast(v) = 0$ can be written as 
		\begin{equation}
		v(x) = \sum_{n=0}^{\infty} (P^{\ast nr}  f(x) - h),
		\label{eq:Poisson Solution u as Inf Sum CP}
		\end{equation}
		which is well-defined, the r.h.s.\ is absolutely convergent, and
			\begin{equation}
			|v(x)|\leq  \vertiii{f}_\beta \hspace{0.4mm} K_{r,v} (1 + \beta V(x)), 
			\label{eq:Poisson eq vx upper bound Pr}
			\end{equation}
			implying the inequality 
			\begin{equation}
                          \label{eq:Poisson eq norm upper bound for growth of v Krv}
                          \|v\|_\beta\!\leq \!K_{r,v} \vertiii{f}_\beta~,
			\end{equation}
			where $K_{r,v}$ is given by 
			\begin{equation}
			\label{eq:K_rv_definition}
			K_{r,v} := \frac{1}{1-\alpha_r} \max \left( 2 + \beta \frac{K_r}{1-\gamma_r}, \beta \right).
			\end{equation}
\noindent	The solution of \eqref{eq:Poisson for Pr} is related to that of \eqref{eq:Poisson CP} by noting that
		\begin{equation}
		(I-\P^{\ast r}) = 	(I-\P^{\ast}) (I + \P^{\ast} + \ldots + \P^{\ast (r-1)}).  
		\label{eq:Poisson for Pr Decomposed}
		\end{equation}
		It follows that  
		\begin{align} 
		\label{eq:Poisson for P vs Pr}
		u(x) := (I + \P^{\ast} + \ldots + \P^{\ast (r-1)}) v(x) 
		\end{align}
		is a solution of \eqref{eq:Poisson CP}.
  To get an upper bound for $|u(x)|$, write 
		\begin{align} 
		\label{eq:Poisson for P vs Pr V2}
		u(x) = (I + \P + \ldots+ \P^{r-1}) ~\delta_x (v). 
		\end{align}
		Taking into account the upper bound for $|v(x)|$ given in \eqref{eq:Poisson eq vx upper bound Pr}, it is seen that it is sufficient to derive upper bounds for $(P^k \delta_x)(V) = P^{\ast k} V (x)$ for $k=1, \ldots , r-1.$

Now, in view of the one-step growth condition and inequality \eqref{eq:Uniform One Step Stab Condition w mu}, we have $\P \mu (V) \leq \gamma_1 \mu (V) + K_1$, for any probability measure $\mu$ \tb{with} $\mu(V) <\infty$. 
		By repeated application of this inequality we obtain the upper bound for $(\P^k\mu)(V)$: 
		\begin{align}
		\gamma_1^k \mu(V) + \sum_{\ell=0}^{k-1}\gamma_1^\ell K_1 \le  
        \gamma_1^k \mu(V)+\frac{\gamma_1^k K_1}{\gamma_1-1}.
        \label{eq:Pk mu V upper bound under one step growth}
		\end{align}

		Using $\mu = \delta_x$ and summing over $k$ from $0$ to $r-1$ we get 
		\begin{align*}
		(I + \P + \ldots + \P^{r-1}) \delta_x (V)  \le \sum_{k=0}^{r-1} \gamma_1^k \left(V(x)  + \frac{K_1}{\gamma_1-1} \right). 
		\end{align*}
		The right hand side is bounded from above by  
		\begin{equation}
		\frac{\gamma_1^r }{\gamma_1-1} \left(V(x)  + \frac{K_1}{\gamma_1-1} \right). 
		\end{equation}
		Combining these inequalities with \eqref{eq:Poisson eq vx upper bound Pr} and \eqref{eq:Poisson for P vs Pr} we get   
		\begin{align} 
		 |u (x)| = &|(I + \P + \ldots+ \P^{r-1}) 
		\delta_x (v)| \le \vertiii{f}_{\beta} ~K_{r,v} \times\nonumber \\ 
		& \times\! \left (r+ \beta  \frac{\gamma_1^r }{\gamma_1-1} \left(V(x)  + \frac{K_1}{\gamma_1-1} \right)   \right),
		\end{align}
		implying the upper bound of the form given in \eqref{eq:Poisson Eq Upper Bound for ux with V(x) Pr}.

		As for {\it uniqueness}, assume that there are two solutions $u_1, u_2 \in \mathcal{L}_V$, and let $\Delta u=u_2-u_1$. 		
Then, $(I-\Ps) \Delta u(x)=0$ for all $x$, implying $\Ps \Delta u=  \Delta u$. Iterating this $r-1$ times we get $\P^{*r} \Delta u=\Delta u,$ and by Theorem  \ref{th:Poisson Eq Existence} we conclude that $\Delta u$ is a constant function, thus completing the proof. 
\end{proof}

The extension of Theorem \ref{thm:Poisson Lipschitz}, on the Lipschitz continuity of $u_\theta(\cdot)$, seems to be straightforward. However, we should point out that 
we have to assume the  Lipschitz continuity of the {\it one-step} kernels 
$(\P_\theta),$ as given in Assumption \ref{cond:Lipschitz of P theta Dirac x by Miklos}. 

\begin{theorem} 
\label{th:Poisson Lipschitz w Uniform cond on Pr}
Assume that the kernels $(\P_\theta)$ satisfy Assumptions \ref{cond:Uniform Drift Condition for Pr theta}, \ref{cond:Uniform One Step Growth Condition for P theta} and \ref{cond:Local Minorization Extended to Pr Uniform}. In addition assume that the family of one-step kernels $(\P_\theta)$ is Lipschitz continuous is the sense of Assumption \ref{cond:Lipschitz of P theta Dirac x by Miklos}.
Let us fix $\beta= \beta_r > 0$ as given in Proposition \ref{prop:Beta Norm Quasi Contraction by Pr}. Let $(f_\theta)$ be a family of  ${\mathbf X} \rightarrow \mathbb{R}$ measurable functions such that Assumption \ref{cond:f theta beta Lipschitz} holds with the above $\beta$. 
Let $\mu_\theta^\ast$ denote the unique invariant probability measure of $\P_\theta$ and let $h_\theta=\mu_\theta^\ast(f_\theta).$ Consider the  Poisson equations
				\begin{equation}
				\label{eq:Poisson w theta w Uniform cond on Pr}
				(I-P_\theta^\ast)u_\theta(x) = f_\theta(x) - h_\theta. 
				\end{equation}
				Then, $h_\theta$ is Lipschitz continuous in $\theta$:
				\begin{equation}
				\label{eq:h Lipshitz w Uniform cond on Pr}
				|h_\theta-h_{\theta'}|\,\leq\, L_{r,h} |\theta-\theta'|, \\
				\end{equation}
				and the particular solution, given in Theorem \ref{th:Poisson Eq Existence Pr} by \eqref{eq:Poisson Solution u as Inf Sum for Pr} as $
				u_\theta(x)=\sum_{n=0}^{\infty} (P_\theta^{\ast n}  f_\theta(x) - h_\theta)$ 			
				is Lipschitz continuous in $\theta$:
				\begin{equation}
				\label{eq:u Lipshitz w Uniform cond on Pr}
				|u_\theta(x)-u_{\theta'}(x)|\,\leq\, L_{r,u} |\theta-\theta'|(1+ \beta V(x))
				\end{equation}
where $L_{r,u}$ is independent of $x$. Alternatively, we can write 
				\begin{equation}
				\label{eq:u Lipshitz w Uniform cond on Pr b}
				\|u_\theta-u_{\theta'}\|_{\beta}\, \leq\, L_{r,u} |\theta-\theta'|.
				\end{equation}
				Here the constants $L_{r,h}$ and $L_{r,u}$ depend only on the constants appearing in Assumptions \ref{cond:Lipschitz of P theta Dirac x by Miklos}, \ref{cond:f theta beta Lipschitz}, \ref{cond:Uniform Drift Condition for Pr theta}, \ref{cond:Uniform One Step Growth Condition for P theta}, and \ref{cond:Local Minorization Extended to Pr Uniform}.
				\end{theorem}

For the proof we need a simple variant of Lemma \ref{lemma:Lipschitz for Pn with eta} providing an at most exponentially growing bound for $\sigma_\beta(\P^n_{\theta}\eta,\P^n_{\theta^{\prime}})$:
\begin{lemma} 
\label{lemma:Lipschitz for Pn with OneStepGrowth}
Let $(\P_\theta)$ satisfy the uniform one-step growth condition, Assumption \ref{cond:Uniform One Step Growth Condition for P theta}.  In addition, let Assumption \ref{cond:Lipschitz of P theta Dirac x by Miklos}, requiring the Lipschitz continuity of $(\P_\theta),$ hold with some $\beta >0.$  Then for any signed measure $\eta$ with $|\eta|(1+\beta V) < \infty$ and $\theta,\theta^{\prime}\in\Theta,$ and for any $\alpha^{\prime\prime} > \alpha^\prime : = \max (1 + \beta K_1, \gamma_1)$ 
\begin{equation}
  \label{eq:Lipschitz for Pn with any mu Pr}
  \sigma_\beta(\P^n_{\theta}\eta,\P^n_{\theta^{\prime}}\eta)\, \le\, L_P'' |\theta-\theta^{\prime}| (\alpha^{\prime\prime})^{n} 
  |\eta|\left(1+\beta V\right)
\end{equation}
for all $n>0$, where $L_P''$ depends only on 
the constants appearing in the conditions of the lemma
and on $\alpha^{\prime\prime}$.
\end{lemma}
The proof follows the proof of Lemma \ref{lemma:Lipschitz for Pn with eta}, however to estimate $|\P_{\theta^{\prime}}^k\eta|(V)$, we need a modification of \eqref{eq:Pk mu V upper bound under one step growth}, with $|\eta|$ replacing $\mu,$ yielding a restatement of \eqref{eq:Pk mu V upper bound under one step growth CP}:\vspace{-1mm}
  \begin{align}
  \label{eq:One Step Pk eta V upper bound growth} 
    |\P_{\theta'}^k\eta|(V) \leq  \gamma_1^k \left(|\eta|(V)+K_1|\eta|(\mathbf X)/(\gamma_1-1) \right).
  \end{align}
Details will be given in the Appendix.

			\begin{proof}[Proof of Theorem \ref{th:Poisson Lipschitz w Uniform cond on Pr}]
			First, note that $\mu_{\theta}^{*} = \mu_{\theta,r}^{*}$ implies that
			\begin{equation}
			h_{\theta} \,=\, \mu_{\theta}^{*}(f_{\theta})\, = \,  \mu_{\theta,r}^{*} (f_{\theta}).
			\end{equation}
			Applying Theorem \ref{thm:Poisson Lipschitz} for the Poisson equation
			\begin{equation}
			\label{eq:Poisson for Pr CP}
			(I-P_\theta^{* r})v_\theta(x) \,=\, f_\theta(x) - h_\theta,
			\end{equation}
			noting that $P_\theta^{* r}$ satisfies the relevant conditions in view of Lemma \ref{lemma:Lipschitz for Pn with eta}, we conclude that $h_{\theta}$ is Lipschitz continuous in $\theta$:
			\begin{equation}
			|h_\theta-h_{\theta'}|\leq L_{r,h} |\theta-\theta'|, \\
			\label{eq:Poisson h theta psi Lipschitz Pr}
			\end{equation}
			where $L_{r,h}$ is given, according to \eqref{eq:Poisson Lipschitz constant for h}, by 
\begin{equation}
L_{r,h} = \left(L_f +  K_f L_{P^r} \frac{1}{\-1-\alpha_r} \right)\left(1 + \beta_r \frac{K_r}{1-\gamma_r}\right)\!,
\end{equation}
where $L_{P^r}$ is the Lipschitz-constant for $(P^r_\theta),$ ensured by Lemma \ref{lemma:Lipschitz for Pn with eta}, and $\beta_r$ and $\alpha_r$ are chosen as in Proposition \ref{prop:Beta Norm Quasi Contraction by Pr}.

{In order to prove the second part of Theorem \ref{th:Poisson Lipschitz w Uniform cond on Pr}, note  that, in view of Theorem \ref{thm:Poisson Lipschitz}, the particular solution given by 
   $$
   v_\theta(x)=\sum_{n=0}^{\infty} P_\theta^{* nr}  (f_\theta(x) - h_\theta)
   $$
   is Lipschitz continuous w.r.t.\ $\theta$, and
				\begin{equation}
				\label{eq:v Lipshitz w Uniform cond on Pr}
				|v_\theta(x)-v_{\theta'}(x)|\leq L_{r,v} |\theta-\theta'| (1+ \beta V(x)),
				\end{equation}
				where $L_{r,v}$ is defined in \eqref{eq:Poisson Lipschitz constant for u} in the role of $L_u$, according to 
				\begin{equation*}
				L_{r,v} = {\frac 1 {1 - \alpha_r}} \left( L_f +  L_{P^r} \frac{2}{(1-\alpha_r)} K_f\right) \left(2 +  \beta_r \frac{K_r}{1-\gamma_r}\right).
				\end{equation*}
				Hence the particular solution of the Poisson equation \eqref{eq:Poisson w theta w Uniform cond on Pr}: 
				\begin{equation} 
				\label{eq:Possion_solution_for_Pr_rep}
				u_\theta(x) := (I + \P_\theta^\ast + \ldots + \P_\theta^{* (r-1)}) v_\theta(x)
				\end{equation}
				is also Lipschitz continuous in $\theta.$ Indeed, for $1 \le m \le r-1$
				\begin{equation}
				\label{eq:Lip_P_theta_m_decomposed}
				\begin{aligned}
				& P_\theta^{* m} v_\theta - P_{\theta^\prime}^{* m} v_{\theta^{\prime}}\\
				=\; & P_\theta^{* m} (v_\theta -v_{\theta^\prime} ) + (P_{\theta}^{\ast m} - P_{\theta^\prime}^{\ast m}) v_{\theta^{\prime}},
				\end{aligned}
				\end{equation}
				and the first term on the r.h.s. is bounded from above as 
				\begin{equation*}
				| P_\theta^{\ast m} (v_\theta -v_{\theta^\prime} )(x) | \leq L_{r,v} |\theta-\theta'| P_\theta^{\ast m}(1+ \beta V)(x),
				\end{equation*}
				for all $x$. 
Applying \eqref{eq:One Step Pk eta V upper bound growth}  
    with $\eta = \delta_x$ we get the upper bound
				\begin{equation}
				\label{eq:Lip_P_theta_m_decomposed_term1_upper_bound}
				|P_\theta^{\ast m} (v_\theta -v_{\theta^\prime} )| \le L_{r,v} |\theta-\theta'| \gamma_1^m \left(V(x)+\frac{ K_1}{\gamma_1-1} \right).
				\end{equation}
				
For the second term on the right hand side of \eqref{eq:Lip_P_theta_m_decomposed} first we note that $\vertiii{v_{\theta^\prime}}_{\beta} < \infty$ by \eqref{eq:Poisson eq norm upper bound for growth of v Krv} in the proof of Theorem \ref{th:Poisson Eq Existence Pr}:
				\begin{equation}
				\vertiii{v_{\theta^\prime}}_{\beta} \le \| {v_{\theta^\prime}}\|_{\beta} \leq K_{r,v} \vertiii{f_{\theta^\prime}}_\beta \le  K_{r,v} K_f
				\end{equation}
				for some constant $K_{r,v}>0$ depending only on the constants appearing in Assumptions \ref{cond:Uniform Drift Condition for Pr theta}, \ref{cond:Uniform One Step Growth Condition for P theta} and \ref{cond:Local Minorization Extended to Pr Uniform}. Now we can write 
				\begin{align}
				|(P_{\theta}^{\ast m} - P_{\theta^\prime}^{\ast m}) v_{\theta^{\prime}}(x)| &=
				|(P_{\theta}^{m} - P_{\theta^\prime}^{m})\delta_x(v_{\theta^{\prime}})| \nonumber\\
				&\leq \sigma_{\beta}\big((P_{\theta}^{m} - P_{\theta^\prime}^{m})\delta_x\big) \hspace{0.4mm}\vertiii{v_{\theta^{\prime}}}_{\beta}\nonumber\\
				 &= 
				\sigma_{\beta}(P_{\theta}^{m}\delta_x, P_{\theta^\prime}^{m}\delta_x)\hspace{0.5mm}\vertiii{v_{\theta^{\prime}}}_{\beta}.
				\label{eq:P theta m minus P theta prime m times v}
				\end{align}
                Applying Lemma \ref{lemma:Lipschitz for Pn with OneStepGrowth} we get that 
    the r.h.s. is bounded by
				\begin{equation}
				\label{eq:Lip_P_theta_m_decomposed_term2_upper_bound}
				L_P'' |\theta-\theta^{\prime}|  (\alpha^{\prime\prime})^{m}  \left(1+\beta V(x)\right) \cdot K_{r,v}  \vertiii{f_{\theta^{\prime}}}_\beta \hspace{0.4mm} 
				\end{equation}
				where $K_{r,v}$ is defined by \eqref{eq:K_rv_definition}. Recall that $\sup_{\theta^{\prime}} \vertiii{f_{\theta^{\prime}}}_\beta = K_f < \infty$ by  Assumption \ref{cond:f theta beta Lipschitz}. Taking into account the representation of $u_\theta(x)$ given in  \eqref{eq:Possion_solution_for_Pr_rep}, and the decomposition given in \eqref{eq:Lip_P_theta_m_decomposed}, and adding the upper bounds 	\eqref{eq:Lip_P_theta_m_decomposed_term1_upper_bound} and  \eqref{eq:Lip_P_theta_m_decomposed_term2_upper_bound} for $m= 0, \dots, r-1$, we get the second claim.	}
\end{proof}

\section{Design of Queuing Systems}
\label{sec:Design of Quees}

\ms
In this section the applicability of our results will be demonstrated on the modification of a classical textbook example, a simple queuing system. For the sake of simplicity we restrict attention to a single arrival process handled by a single server, where both the arrival and service process may be subject to control. For now we consider a system with open loop control, a multivariate version of which may be a realistic model for a client assignment systems. Extension of our approach to queuing systems allowing feedback, such as Call Admission Control, see e.g., Chapter 11 of \cite{queue}, may be the subject of future research.

Let us first describe the dynamics of a queue without control. Let the arrival process be identified by a simple point process ${\tau_n}, \, n \ge 0$ with $\tau_0=0,$ and let $T_{n+1}$ be the (finite) time elapsed between the arrivals of customers $n$ and $n+1,$ i.e. $T_{n+1} =  \tau_{n+1} - \tau_n.$ Let $S_{n}$ be the service time of customer $n.$ It is assumed that \tb{the sequences $(T_{n})$ and $(S_{n})$, $n\in\mathbb{N}$,} are i.i.d.\
sequences of $\mathbb{R}_{+}$-valued random variables, respectively, independent of each other. We define $x^{+}:=\max\{x,0\}$, $x\in\mathbb{R}$.
The waiting time of the $n$-th customer will be denoted by $W_{n}.$ It is readily seen that it satisfies the recursion, with $W_{0}=0$ as initial value,  $n\in\mathbb{N}$ and with $ U_{n+1} = S_{n} - T_{n+1}$: 
\begin{equation}
\label{eq:Dynamics_of_Queue}
W_{n+1}=(W_{n} +  U_{n+1})^{+}. 
\end{equation}

\tb{In the case of controlled queues both the service time and the arrival time may depend on a control parameter $\theta$. Let $\Theta\subset\mathbb{R}^{k}$ be a connected, open set as above, and let $D \subset\mathbb{R}^{k}$ be a compact set such that $D \subset \Theta.$} The choice of the control parameter $\theta$ may determine the law of the service times and that of the arrival times.  Compactness of $D$ is a technical condition needed for the verification of Assumption  \ref{cond:Uniform Drift Condition for Pr theta}. The dynamics of the queue is described, with $W_{\theta, 0}=0,$  and $U_{\theta, n+1} := S_{\theta, n} - T_{\theta, n+1},$ by 
\begin{equation}
\label{eq:Dynamics_of_Queue_with_Parameter}
W_{\theta, n+1}  =(W_{\theta, n} + U_{\theta, n+1})^{+}.
\end{equation}
If the initial condition is $W_{\theta, 0}=x$, then the waiting time at $n$ will be denoted by 
$W_{\theta, n}(x).$ To guarantee stability of the queue we have to assume 
\begin{equation}
\mathbb E[\, U_{\theta, 1}\, ]< 0,
\end{equation}
for all $\theta\in D.$ This is a standard condition implying stability of the queue for any fixed $\theta$ in a variety of interpretations, see e.g., \cite {loynes1962,borovkov} and \cite{diaconis1999iterated}. 

The validity of the drift condition, given as Assumption \ref{cond:Uniform Drift Condition}, with no parameter-dependence, has been established under appropriate technical conditions, using the Lyapunov function $V(x):=e^{\chi x}$, $x\in\mathbb{R}_{+},$ with $\chi>0$ small enough, in \ Section 16.4 of \cite{meyn2012markov}. A uniform version of this result will be established below. In fact, we will show that under reasonable additional conditions on a controlled queue Assumptions \ref{cond:Uniform Drift Condition}, \ref{cond:Lipschitz of P theta Dirac x by Miklos}, \ref{cond:Local Minorization Extended to Pr Uniform} are satisfied with $V(x):=e^{\chi x}$, $x\in\mathbb{R}_{+},$ with $\chi>0$ small enough, when $\theta$ is restricted to compact set $D \subset \Theta$. Thus the results of the paper, in particular Theorems \ref{prop:Invariant Measure Pr}, \ref{th:Poisson Eq Existence Pr}, \ref{th:Poisson Lipschitz w Uniform cond on Pr}, imply the existence, uniqueness and Lipschitz continuity of the solution $u_\theta(x)$ of the parameter-dependent Poisson equation
	\begin{equation}
	\label{eq:Poisson eq with theta queue}
	u_\theta(x) - \mathbb E_\theta [\, W_{\theta, 1} \hspace{0.5mm}\vert\hspace{0.5mm} W_0=x\, ] \,=\, f_\theta(x) - h_\theta,
	\end{equation}
	where the normalizing constant $h_\theta$ is the expectation of $f_\theta(x)$ under the (unique) invariant measure, when $\theta$ is restricted to an open set $\Theta^\prime \subset D$ in place of $\Theta$.

The conditions below will be given in terms of the r.v. $U_{\theta, 1}$ thus ensuring the generality of our results. Specific conditions in terms of $S_{\theta, 0}$ and $T_{\theta, 1}$ will be given at the end of the section. 
To guarantee stability of the system \eqref{eq:Dynamics_of_Queue_with_Parameter} we stipulate:

\begin{assumption}
	\label{cond:Stability_of_Queue} We have $ \mathbb E[\, U_{\theta, 1}\, ]< 0$, for all $\theta\in D.$
\end{assumption}

This is a standard condition implying stability of the queue for any fixed $\theta$ in a variety of interpretations, see e.g., \cite {loynes1962,borovkov} and \cite{diaconis1999iterated}. 
A further standard condition in queuing theory, and also in the area of risk processes \cite{andersen}, is the existence of a finite positive exponential moment of $S_{\theta, n} - T_{\theta, n+1},$ or equivalently that of $U_{\theta, n}.$  A uniform version of this condition in terms of $U_{\theta, 1}$ is given below: 

\begin{assumption}
	\label{cond:Exponential_Moment_for_U_Uniform}
	\tb{We have $\sup_{\,\theta\in D}\mathbb E[\,\exp(\eta \, U_{\theta, 1})\, ]<\infty$, for some $\eta >0$.}
\end{assumption}

\tb{Observe that} Assumption \ref{cond:Exponential_Moment_for_U_Uniform}
is automatically satisfied if 	$\sup_{\,\theta\in D}\mathbb E[\,\exp(\eta \,  S_{\theta, 0})\, ]<\infty.$ Finally, we will need the following continuity condition for $U_{\theta, 1}$:
\begin{assumption}
	\label{cond:Weak_Cont_of_U_theta}  
	The probability distribution function of $U_{\theta, 1}$ is weakly continuous in $\theta$ for $\theta\in D,$ \tb{i.e. $\mathbb E[\, f(U_{\theta, 1})\, ]$ is continuous in $\theta$ for all bounded continuous functions $f$.}
\end{assumption}

We note in passing that these three assumptions imply that the stability condition, 
Assumption \ref{cond:Stability_of_Queue}, is satisfied uniformly in $\theta$ for $\theta \in D:$\vspace{-1mm}
\begin{equation}
\label{eq:Stability_of_Queue_Uniform}
\sup_{\theta\in D} \mathbb E[\, U_{\theta, 1}\, ]< 0.
\end{equation}

{\bf Uniform Drift Condition.} The validity of the uniform drift condition, given as Assumption \ref{cond:Uniform Drift Condition}, with no parameter-dependence, has been established using the Lyapunov function $V(x):=e^{\chi x}$, $x\in\mathbb{R}_{+},$ with $\chi$ small enough, 
see e.g., Section 16.4 of \cite{meyn2012markov}. (We should note that the use of exponential moments is also a standard tool in the theory of risk processes, see \cite{andersen}). For the sake of completeness and further reference we restate this result, and provide its proof.

\begin{lemma}
	\label{lem:Drift_Cond_Valid}
	Let us assume that $U$ is an $\mathbb R$-valued random variable such that $\mathbb E \, U < 0,$ and for some $\eta >0$ we have ${\mathbb E} \, [\,  e^{\eta \, U} \,] < \infty.$ Then there exist $0 < \chi_0 < \eta$ such that for all $0< \chi \le \chi_0$ there exists $0<\gamma<1, $ such that
	\begin{equation}
	\label{eq:Drift_Cond_Valid}
	{\mathbb E} \,[ \, e^{\chi (x+U )^+}\, ] \,\leq\, \gamma e^{\chi x} +1.  
	\end{equation}
\end{lemma}

\vspace*{-2mm}
A minor, but essential technical extension of the above arguments, in order to derive a uniform version of the drift condition, is stated in the lemma below:   
\begin{lemma}
	\label{lem:Uniform_Drift_Cond_Valid}
	Let $U(\theta) :=U_{\theta, 1}, \, \theta \in D,$ be a family of $\mathbb R$-valued random variables, satisfying Assumptions \ref{cond:Stability_of_Queue}, \ref{cond:Exponential_Moment_for_U_Uniform} and \ref{cond:Weak_Cont_of_U_theta}. Then there exist $0 < \chi_0 < \eta$ such that for all $0< \chi \le \chi_0$ there exists $0<\gamma<1, $ such that for all $\theta \in D$
	\begin{equation}
	\label{eq:Drift_Cond_Valid_K}
	{\mathbb E} \,[ \, e^{\chi (x+U(\theta) )^+}\, ]\, \leq\, \gamma e^{\chi x} +K.  
	\end{equation}
\end{lemma}
For the proofs of Lemmas \ref{lem:Drift_Cond_Valid} and
\ref{lem:Uniform_Drift_Cond_Valid} see the Appendix.

\begin{remark}
Taking into account the proof of Lemma \ref{lem:Drift_Cond_Valid} it is clear that all we need to show to prove Lemma \ref{lem:Uniform_Drift_Cond_Valid} is that a uniform version of \eqref{eq:Drift_Cond_Valid_Proof_2} is valid, i.e. that there exists a $0 < \chi_0 < \eta$ and some $\varepsilon > 0$ such that for all $0< \chi \le \chi_0$ and for all $\theta \in D$
\begin{equation}
\label{eq:Uniform_Drift_Cond_Valid_Proof_1}
{\frac {{\mathbb E} \,[ \, e^{\chi U(\theta) )}\, ]  - 1} {\chi}} \le - \varepsilon < 0.
\end{equation}
\end{remark}

Let us define the family of functions $g(\chi, \, \theta) : = {\mathbb E} \,[ \, e^{\chi U(\theta)}\, ].$ By Assumption \ref{cond:Exponential_Moment_for_U_Uniform} it is readily seen that the random variables $e^{\chi U(\theta)}$ are uniformly integrable for $\chi < \eta$, and, therefore, by Assumption \ref{cond:Weak_Cont_of_U_theta} it follows that $g(\chi, \, \theta)$ is continuous in $\theta.$ The desired claim \eqref{eq:Uniform_Drift_Cond_Valid_Proof_1} now follows from the following lemma formulated in the context of convex analysis:

\begin{lemma} 
	\label{lem:Convex_Uniform_Negative_Slope}
	Let  $g(\chi, \, \theta)$ be a family of convex functions in the variable $\chi$ 
	with $0 \le \chi < \eta$ and $\theta \in D \subset \mathbb R^k $ with $D$ being a compact set, such that 
	\begin{itemize}
		\item $g(0, \, \theta) =1$ for all $\theta \in D,$
		\item  for all fixed $\theta \in D$ we have 
		\begin{equation}
		\label{cond:Convex_Negative_Slope}
		\inf_{0 < \chi < \eta}{\frac { g(\chi, \, \theta)   - 1} {\chi}} < 0,
		\end{equation}
		\item for all fixed $0 \le \chi < \eta$ the function $g(\chi, \, . \,)$ is continuous in $\theta.$
	\end{itemize}
	Then there exists $\chi_0 >0$ such that for $0< \chi \le \chi_0$ we have 
	\begin{equation}
	\label{eq:Convex_Uniform_Negative_Slope}
	\sup_{\theta \in D} \, {\frac { g(\chi, \, \theta)   - 1} {\chi}} < 0.
	\end{equation}
	It follows that \, $\sup_{\theta \in D} \,  g(\chi, \, \theta)  < 1$ for $0< \chi \le \chi_0.$ 
\end{lemma}

\begin{remark} It also readily follows that 
\begin{equation*}
\label{eq:Convex_Uniform_Negative_Slope_Remark}
\sup_{\theta \in D} \, \inf_{0 < \chi < \eta}{\frac { g(\chi, \, \theta)   - 1} {\chi}}
= \inf_{0 < \chi < \eta}  \, \sup_{\theta \in D} {\frac { g(\chi, \, \theta)   - 1} {\chi}} < 0.
\end{equation*}
\end{remark}

{\bf Local Minorization for $P_{\theta}^r(x, \, .)$.} As for the local minorization condition it will be verified in a form slightly stronger than \tb{our Assumption \ref{cond:Local Minorization Extended to Pr Uniform}}, by showing that for any fixed $R >0$ there exists some integer $r \ge 1,$ which may depend on $R,$ such that 
\begin{equation}
\label{eq:Local Minorization for Pr for queues}
\inf_{\theta \in D} \, \inf_{0 \le x \le R} P_{\theta}^r (x, \{0\}) > 0.
\end{equation}
\tb{Indeed, the above inequality implies Assumption \ref{cond:Local Minorization Extended to Pr Uniform}. To see this, take an arbitrary $R_r$, as in Assumption \ref{cond:Local Minorization Extended to Pr Uniform}, satisfying $R_r>2K_r/(1-\gamma_r).$ Then the set 
\begin{equation}
{\cal C}_r=\{x\in{\mathbf X} :  V(x) \leq R_r \} = \{x\in{\mathbf X} :  e^{\chi x} \leq R_r \}
\end{equation}
is of the form $\{0 \le x \le R\}$ with some $R$. Letting $\bar{\mu}_r$ denote the probability measure assigning unit mass to $0$ inequality \eqref{eq:Local Minorization for Pr for queues} implies $\P_\theta^r(x,A)\geq\bar{\alpha}_r \bar{\mu}_r(A)$ with some $\bar{\alpha}_r\in(0,1)$ for all $\theta \in \Theta$ and $x \in {\cal C}_r,$ as postulated by \eqref{eq:Local Minorization Extended to Pr Uniform}. 
}

To prove  \eqref{eq:Local Minorization for Pr for queues} we will use arguments familiar in queuing theory, but to establish uniform bounds extra care is needed. Consider first a fixed $\theta \in D,$ and let $R>0$ be any fixed real number, defining the bounded 
$[0,R].$ Let $0 \le x \le R$ and consider the $r$-step transition probability 
$P_{\theta}^r(x, \{0\}) = P(W_{\theta, r}(x) = 0)$ for some integer $r\geq 1$.

\begin{lemma}\label{minori}
	There is $\epsilon>0$ such that 
	\begin{equation}
	\upsilon:=\inf_{\theta\in D}P(U_{\theta, 1}< -\epsilon)>0.
	\end{equation}
\end{lemma}

\begin{corollary}\label{corollary:3} For each $R>0$ 
	there is $r\in\mathbb{N}$ such that 
	\begin{equation}
	\inf_{\theta \in D}\inf_{0\leq x\leq R}P(W_{\theta, r}(x)=0)>0.
    \vspace{2mm}
    \end{equation}
\end{corollary}

\vspace{-2mm}
A nice additional result is the following: let $W_{\theta, s}^\ast$ denote the stationary solution of the queue dynamics given by 
\begin{equation}
W_{\theta, s}^\ast := \, \max_{-\infty \le k \le s} ( U_{\theta, k} + \ldots + U_{\theta, s})^+.
\end{equation}
Then 
\begin{equation}
\inf_{\theta \in D}\inf_{0\leq x\leq R}P(W_{\theta, r}(x)=0) \ge C_r \inf_{\theta \in D}\, P (W_{\theta, r}^\ast = 0), 
\end{equation}
where $C_r\to 1$ exponentially fast as $r\to \infty$. Moreover,
\vspace*{-1mm}
\begin{equation}
\inf_{\theta \in D}\, P (W_{\theta,r}^\ast = 0) > 0.
\end{equation}

{\bf Lipschitz Continuity of $P_\theta$.} In order to verify Assumption \ref{cond:Lipschitz of P theta Dirac x by Miklos} we will need to strengthen our assumptions on $S_{\theta, 0}$ and $T_{\theta, 1}$. We may consider various scenarios, briefly discussed below, both of them implying a common condition on $U_{\theta, 1}$ as follows: 

\begin{assumption}
	\label{cond:Lip_on_U_theta}
	The probability distribution function of $U_{\theta, 1}$ has a density function for all $\theta\in D,$ denoted by $\zeta_{\theta}(\cdot)$ 
	There exist $\eta^{\prime \prime} > 0$ and $C^{\prime \prime}>0$ such that for all $\theta, \, \theta^{\prime} \in \Theta$ and all $x\in\mathbb{R}$, and it holds that 
	\begin{equation}
	\vert \zeta_{\theta}(x) - \zeta_{\theta^{\prime}}(x) \vert \,\leq\, C^{\prime \prime} \, e^{-\eta^{\prime \prime} \vert x \vert } \vert \, \theta - \theta^{\prime} \vert.
	\end{equation}
\end{assumption}

Assumption \ref{cond:Lip_on_U_theta} can be conveniently verified by imposing the following assumption on $S_{\theta, 0}$ and $T_{\theta, 1}: =T_{1}$, when $T_{\theta, 1}$ is assumed to be independent of $\theta:$ the probability distribution functions of $S_{\theta, 0}$ has a density function for all $\theta\in D,$ denoted by $\xi_{\theta}(\cdot),$ and there exist $C^{\prime},\eta^{\prime}>0$ such that for all $\theta, \theta^{\prime} \in \Theta$ and all $x\in\mathbb{R}_+$, it holds that $\vert \xi_{\theta}(x) - \xi_{\theta^{\prime}}(x) \vert \leq C^{\prime} e^{-\eta^{\prime} x} \vert {\theta} - {\theta^{\prime}} \vert.$ Moreover the probability distribution function of $T_{1}$ has a density function denoted by $\kappa(\cdot)$ such that for all $x\in\mathbb{R}_+$ it holds that $\kappa(x) \leq C^{\prime} e^{-\eta^{\prime} x}. $

The first part of the above auxiliary assumption can be conveniently 
checked by requiring the existence of a density function $\xi_{\theta}(\cdot)$ such that the mapping $(\theta,x) \to \xi_{\theta}(x)\in \mathbb{R}_{+}$ is measurable,  and for each fixed $x$ continuously differentiable in $\theta\in\Theta$, moreover 
there exist $C^{\prime},\eta^{\prime}>0$ such that for all $\theta\in \Theta$ and all $x\in\mathbb{R}_+$
it holds that $\Vert {\frac {\partial} {\partial {\theta}}} \xi_{\theta}(x) \Vert \leq C^{\prime} e^{-\eta^{\prime} x}.$ The proof is  
readily obtained by the mean-value theorem. Incidentally, it also follows that the 
law of $S_{\theta, 0}$ is continuous in total variation, and, a fortiori, also weakly, 
implying Assumption  \ref{cond:Weak_Cont_of_U_theta}.

A condition reciprocal to the above is obtained by interchanging the role of $S_{0}$ and $T_{1},$ i.e. assuming that $S_{0}$ does not depend on $\theta,$ while $T_{1}=T_{\theta, 1}$ does. We note that in this case the requirement that the density function of $S_{0},$ denoted by $\xi(\cdot),$ satisfies $\xi(x) \leq C^{\prime}  e^{-\eta^{\prime} x} $ for all $x\in\mathbb{R}_+$ implies Assumption \ref{cond:Exponential_Moment_for_U_Uniform} with any $\eta < \eta^{\prime}$.

In order to verify Assumption \ref{cond:Lipschitz of P theta Dirac x by Miklos}, let us consider the transition probabilities $P_{\theta}(x,A)$. 
For any Borel set $A\subset\mathbb{R}_{+}, \, 0 \notin A$ we can write
\begin{align}
P_{\theta}(x,A)&=P(W_{\theta, 1}(x)\in A)= \mathbb E \, [\mathbf{1}_A(x+U_{\theta, 1})] \nonumber \\
& =\int_{-\infty}^{\infty}  \mathbf{1}_A(x+z) \, \zeta_\theta(z) \, \dd z. 
\end{align}
\tb{State} $0,$ being an atom for $P_{\theta}(x,\,.),$ is reached with probability\hspace*{-3mm}
\begin{align}
P_{\theta}(x, \{0\}) &= P(U_{\theta, 1} \leq -x ) = 
\int_{-\infty}^{-x}  \zeta_\theta(z) \, \dd z =  \nonumber \\ &\int_{-\infty}^{\infty}  \mathbf{1}_{(-\infty, 0]}\, (x+z) \, \zeta_\theta(z) \, \dd z.  
\end{align}

\begin{lemma}
	\label{lem:Lip_Stated}
	Let Assumption \ref{cond:Lip_on_U_theta} hold, and let $V(x):=e^{\chi x}$, $x\in\mathbb{R}_{+},$ with $\chi < \eta^{\prime \prime}.$ Then for all $\phi \in {\cal L}_V$ with $||\phi||_{\beta}\leq 1$ and all $\theta,\theta'\in\Theta$ we have with some $L>0$ 
	$$
	\int_{\mathbb{R}_{+}}\phi(y) (P_{\theta}(x,\dd y)-P_{\theta'}(x,\dd y))\,\leq\, L (1+\beta V(x))|\theta-\theta'|.
	$$	
\end{lemma}

\section{Discussion}		
\label{sec:Discussion}
\medskip
\noindent
We have have re-visited a key technical issue in the theory of stochastic approximation in a Markovian framework, developed in \cite{benveniste1990}: 
the Lipschitz-continuity of the solution of the associated Poisson equation w.r.t. the parameter $\theta,$ characterizing the system dynamics. 
A set of simple conditions have been formulated under which the desired Lipschitz-continuity can be established, significantly simplifying the relevant conditions and results of \cite{benveniste1990}. 
We demonstrated the utility of a  
powerful, off-beat result on the stability of Markov chains in \cite{hairer2011yet}, proving that the transition kernels are contractions in the space of differences of probability measures in a suitable metric. 

The uniform drift condition, Assumption \ref{cond:Uniform Drift Condition}, and its relaxation, restating it for some power of the kernel, $\P^r_\theta(x,A),$ see Assumption \ref{cond:Uniform Drift Condition for Pr theta},  
is akin to condition (A'.5)(i), p.\ 290 of \cite{benveniste1990}, but allowing more general Lyapunov-functions $V(\cdot)$. As for the condition on the Lipschitz-continuity of the kernel our Assumption \ref{cond:Lipschitz of P theta Dirac x by Miklos} is in most aspects significantly less restrictive than the corresponding conditions of a key result, {Theorem 6, p.\, 262} of \cite{benveniste1990}. Noting {some} overlaps between the proofs of the latter result of \cite{benveniste1990} and that of our Theorem \ref{thm:Poisson Lipschitz}, a question for future research is whether our Assumption \ref{cond:Lipschitz of P theta Dirac x by Miklos} can be relaxed by using a smaller class of test functions.
To complete the loop, the technology presented here is going to be applied for the ODE analysis of recursive estimators along the lines of \cite{benveniste1990}.

The viability of our results has been demonstrated on the modification of a textbook example of a queuing system, allowing open-loop control. The extension of our analysis to complex networks, 
with several servers and/or customers allowing feedback control is an attractive and challenging problem. 

The setup of our paper is suitable for the analysis of certain cyber-physical systems, incorporating systems described by stochastic partial differential equations (SPDEs). A prototype for Markov processes arising in such context is given in the lecture notes 
\cite{berglund2019introduction}.
Finally, recent intense interest in online machine learning will definitely inspire further applications, especially in various stochastic gradient methods and reinforcement learning where the Markovian setup is the standard choice.

\backmatter

\bigskip
\bmhead{Acknowledgements}

{A. Car\`e and B. Cs. Cs\'aji were (partially) supported by the European Commission through the H2020 project Centre of Excellence in Production Informatics and Control (EPIC, 739592). 
B. Cs. Cs\'aji and L.\ Gerencs\'er were supported by the European Union within the framework of the National Laboratory for Autonomous Systems (RRF-2.3.1-21-2022-00002). M.~R\'asonyi and B.~Gerencs\'er were supported by NRDI (National Research, Development and Innovation Office) grant KKP 137490. B.~Gerencs\'er was also supported by the János Bolyai Research Scholarship of the Hungarian Academy of Sciences. M.~R\'asonyi was also supported by the NRDI grant K 143529. The authors thank to M\'at\'e Gerencs\'er for his comments on the potential of Markov processes defined by SPDEs.}\\

\begin{appendices}
\section{Proofs}\label{secA1}

\begin{proof}[Proof of Corollary \ref{cor:sigma beta eq rho beta for pairs}]
	Indeed, since $\{\varphi: \|\varphi\|_\beta \le 1\} \subseteq \{\varphi: \vertiii{\varphi}_\beta \le 1\}$, we 
    get $\rho_\beta(\mu_1,\mu_2) \le \sigma_\beta(\mu_1,\mu_2).$ On the other hand, take $\varepsilon >0$ and let $\varphi$ be such that $\vertiii{\varphi}_\beta \le 1$ and
	\begin{equation}
	\int_{\mathbf X} \varphi(x) (\mu_1-\mu_2)(\mathrm{d}x) \ge \sigma_\beta(\mu_1,\mu_2) - \varepsilon.     
	\end{equation}
	By Definition \ref{def:Triple Norm} there exists a constant $c$ such that $\vertiii{\varphi}_\beta=\|\varphi+c\|_\beta$. Thus, $\|\varphi+c\|_\beta \leq 1$, therefore   
\begin{align}
\rho_\beta(\mu_1,\mu_2)	&\ge \int_{\mathbf X} (\varphi(x) + c) (\mu_1-\mu_2)(\mathrm{d}x)\nonumber\\
& =  \int_{\mathbf X} \varphi(x) (\mu_1-\mu_2)(\mathrm{d}x)\nonumber\\
& \ge (\sigma_\beta(\mu_1,\mu_2) - \varepsilon).
\end{align}
	Since $\varepsilon$ is arbitrary, we get that $\rho_\beta(\mu_1,\mu_2) \ge \sigma_\beta(\mu_1,\mu_2).$ Combining with the opposite inequality, we get the claim.
\end{proof}

\begin{proof}[Proof of Lemma \ref{lemma:P theta Lipschitz with mu}]
For the integral of the left hand side of \eqref{eq:Miklos Lipschitz Dirac x Applied to varphi} we apply Fubini's theorem to get
\begin{equation}
\label{eq:Fubini for mu P phi}
\int_{\mathbf X} \left( \int_{\mathbf X}  \varphi(y) \P_{\theta} (x, \dd y)\right) \mu(\mathrm{d}x) =
\int_{\mathbf X} \varphi(y) \eta(\dd y),
\end{equation}
where the measure $\eta=P_{\theta}\mu$ is defined as usual by $\eta(A) = \int_{\mathbf X} \P_{\theta} (x, A)  \mu(\mathrm{d}x)$. The measure $\eta$ is finite, since $\mu(\mathbf X) < \infty$.
The application of Fubini's theorem is justified since 
\begin{align*}
\iint | \varphi(y) | \P_{\theta} (x, \dd y) \mu(\mathrm{d}x) & \leq \iint (1 + \beta V(y)) \P_{\theta} (x, \dd y) \mu(\mathrm{d}x)\\
& \leq \int (1 + \beta (\gamma V(x) + K)) \mu(\mathrm{d}x),
\end{align*}
and the right hand side is finite. Using the same argument for $\theta'$, altogether we obtain for the integral of \eqref{eq:Miklos Lipschitz Dirac x Applied to varphi} 
\begin{equation*}
\int_{\mathbf X} \varphi(y)\left( \P_{\theta}\mu(\dd y) - \P_{\theta^{\prime}}\mu(\dd y)\right) \leq  L_P|\theta-\theta^{\prime}|\mu(1+\beta V).
\end{equation*}
Since $\varphi$ is arbitrary subject to $\|\varphi\|_{\beta} \leq 1$, we conclude that $\rho_\beta(\P_{\theta}\mu,\P_{\theta^{\prime}}\mu)= \sigma_\beta(\P_{\theta}\mu,\P_{\theta^{\prime}}\mu)$ is bounded by the right hand side of \eqref{eq:Miklos Lipschitz Dirac x Applied to varphi}, and we get the statement of the Lemma.
\end{proof}

\begin{proof}[Proof of Lemma \ref{lemma:P theta Lipschitz wrt signed eta}]
Consider the Hahn-Jordan decomposition $\eta = \eta^+ - \eta^-$, as recalled after the lemma itself. Then
\begin{equation*}
\sigma_\beta((\P_{\theta}- \P_{\theta^{\prime}})\eta) \leq 
\sigma_\beta((\P_{\theta}- \P_{\theta^{\prime}})\eta^+) + \sigma_\beta((\P_{\theta}- \P_{\theta^{\prime}})\eta^-).
\end{equation*}
Using Lemma \ref{lemma:P theta Lipschitz with mu} for both terms we get the   upper bound:
\begin{equation}
L_P|\theta-\theta^{\prime}|\eta^+(1+\beta V)+ L_P|\theta-\theta^{\prime}|\eta^-(1+\beta V).
\end{equation}
Noting that $\eta^+ +\eta^- = |\eta|$ the lemma follows.

\end{proof}

\begin{proof}[Proof of Lemma \ref{lemma:Lipschitz for Pn with eta}]
We can estimate $\sigma_\beta(\P^n_{\theta}\eta,\P^n_{\theta^{\prime}}\eta)$ from above, using a kind of telescopic sequence of triangular inequalities, leading to the upper bound 
\begin{align}
 \label{eq:P1n vs P2n Triangle}
&\sum_{k=0}^{n-1}\sigma_\beta( \P^{n-k}_{\theta}\P^{k}_{\theta^{\prime}} \eta, \P^{n-k-1}_{\theta}\P^{k+1}_{\theta^{\prime}} \eta  ) \nonumber \\
= &\sum_{k=0}^{n-1}\sigma_\beta( \P^{n-k-1}_{\theta}\P_{\theta}\P^{k}_{\theta^{\prime}} \eta, \P^{n-k-1}_{\theta}\P_{\theta^{\prime}}\P^{k}_{\theta^{\prime}} \eta ).
\end{align}
Note that the measures $\P_{\theta}\P^{k}_{\theta^{\prime}} \eta$ and $\P_{\theta^{\prime}}\P^{k}_{\theta^{\prime}} \eta$ satisfy $\P_{\theta}\P^{k}_{\theta^{\prime}} \eta(\mathbf X) =\P_{\theta^{\prime}}\P^{k}_{\theta^{\prime}} \eta(\mathbf X)$, hence their $\sigma_\beta(.,.)$ distance is well-defined, see Definition \ref{def:sigma beta for pairs}.

Using the contraction property of the kernels $\P^{n-k-1}_{\theta}$, see Proposition \ref{prop:Contraction of sigma beta}, we obtain the upper bound
\begin{equation}
\label{eq:P1n vs P2n Triangle plus Contraction}
\begin{aligned}
\sum_{k=0}^{n-1} \alpha^{n-k-1}\sigma_\beta( \P_{\theta}\P^{k}_{\theta^{\prime}} \eta, \P_{\theta^{\prime}}\P^{k}_{\theta^{\prime}}\eta  ).
\end{aligned}
\end{equation}
For the $k$-th term we estimate $\sigma_\beta( \P_{\theta}\P^{k}_{\theta^{\prime}} \eta - \P_{\theta^{\prime}}\P^{k}_{\theta^{\prime}}\eta  )$ from above applying Lemma \ref{lemma:P theta Lipschitz wrt signed eta} with $\P^{k}_{\theta^{\prime}} \eta$ taking the role of $\eta$ to get the following upper bound for 
\eqref{eq:P1n vs P2n Triangle plus Contraction}:
\begin{equation}
\label{eq:Sum of Pk theta Lipschitz}
L_P |\theta-\theta^{\prime}| \sum_{k=0}^{n-1}  \alpha^{n-k-1}
|\P_{\theta^{\prime}}^k\eta|(1+\beta V).
\end{equation}

Note that by the consequence of the drift condition given in inequality \eqref{eq:Uniform Drift Condition w eta} we can bound $|\P_{\theta}^k\eta|(V)$ for a general $\theta$ by
\begin{equation}
|\P_\theta^k\eta|(V) \leq \gamma |\P_\theta^{k-1}\eta|(V) + K |\P_\theta^{k-1}\eta|(\mathbf X).
\end{equation}
Noting that $ |\P_\theta^{k-1}\eta|(\mathbf X) \leq |\eta|(\mathbf X),$ and iterating the above inequality, we get
\begin{align}
|\P_\theta^k\eta|(V) &\leq \gamma^2 |\P_\theta^{k-2}\eta|(V) + \gamma K |\eta|(\mathbf X) + K |\eta|(\mathbf X) \nonumber \\[1mm]
&\cdots \nonumber \\
& \leq  \gamma^k |\eta|(V) + \sum_{\ell=0}^{k-1}\gamma^\ell K |\eta|(\mathbf X) \nonumber \\
&\leq  \gamma^k |\eta|(V)+\frac{K}{1-\gamma} |\eta|(\mathbf X).
\label{eq:Pk mu V upper bound}
\end{align}
By plugging \eqref{eq:Pk mu V upper bound} into the sum in \eqref{eq:Sum of Pk theta Lipschitz}, we get the upper bound
\begin{equation*}
\sum_{k=0}^{n-1} \alpha^{n-k-1} \left( |\eta|(\mathbf X)+\beta \left(\gamma^k |\eta|(V)+\frac{K}{1-\gamma}|\eta|(\mathbf X)\right)\right).
\end{equation*}
We can write the latter expression as
\begin{align}
\!\!\!\beta \alpha^{n-1}\sum_{k=0}^{n-1} \left(\frac{\gamma}{\alpha}\right)^k\! |\eta|(V)+\!\left(\!1 + \beta \frac{K}{1-\gamma}\right)\! \sum_{k=0}^{n-1}\alpha^k |\eta|(\mathbf X). \label{eq:Sum of Pk theta Lipschitz UB V2}
\end{align}
Summarizing the inequalities \eqref{eq:P1n vs P2n Triangle} to
\eqref{eq:Sum of Pk theta Lipschitz UB V2}, taking into account $\alpha>\gamma$ (see Remark \ref{rem:alpha Majorates gamma}), and bounding  the geometric sums in \eqref{eq:Sum of Pk theta Lipschitz UB V2} with their limit values 
we get the upper bound
\begin{equation}
\label{eq:Lipschitz for Pn fnl}
\frac{1}{\-1-\alpha}\left(1 + \beta \frac{K}{1-\gamma}\right) \vee \frac{\alpha^{n}}{\alpha-\gamma},
\end{equation}
from which the claim follows by setting $n=0$.
\end{proof}

\begin{proof}[Proof of Corollary \ref{corr:Lipschitz for mu stars}]
Note that for any initial probability measure $\mu \in {\cal M}_V$, we have by the triangle inequality
\begin{equation*}
\sigma_\beta(\mu_{\theta}^\ast,\mu_{\theta^{\prime}}^\ast) \leq \sigma_\beta(\mu_{\theta}^\ast,\P^n_{\theta}\mu) +
\sigma_\beta(\P^n_{\theta}\mu,\P^n_{\theta^{\prime}}\mu) + \sigma_\beta(\P^n_{\theta^{\prime}}\mu,\mu_{\theta^{\prime}}^\ast).
\end{equation*}
Letting $n \rightarrow \infty$ the first and the last terms on the r.h.s. converge to zero by Proposition \ref{prop:Contraction of sigma beta}. 
Taking $\mu= \delta_x$, the middle term is upper bounded, for any $n$, in view of Lemma \ref{lemma:Lipschitz for Pn with eta} by 
\begin{equation}
L_P |\theta-\theta^{\prime}|  \cdot C_P ( 1 + \beta V(x)).
\end{equation}
Note that $C_P$ can be replaced by what is given in \eqref{eq:Lipschitz for Pn fnl}. Recalling that $\inf_x V(x) = 0$ by Remark \ref{rem:Constant Shifts of V}, and letting $n \to \infty$,  the claim follows with the constant $C_P^\prime$ stated in the corollary.
\end{proof}

\begin{proof}[Proof of Lemma \ref{lemma:Lipschitz for Pn etaX 0}] 
The starting point of the proof is the inequality, obtained by combining 
\eqref{eq:P1n vs P2n Triangle} -- \eqref{eq:P1n vs P2n Triangle plus Contraction}, applicable also for signed meausures such that $|\eta|(1+\beta V) < \infty$: 
\begin{equation}
\label{eq:Lipschitz for Pn etaX 0 eq1}
\sigma_\beta(\P^n_{\theta}\eta,\P^n_{\theta^{\prime}}\eta)  \leq \sum_{k=0}^{n-1} \alpha^{n-k-1}\sigma_\beta( \P_{\theta}\P^{k}_{\theta^{\prime}} \eta, \P^{k+1}_{\theta^{\prime}} \eta  ).
\end{equation}
A key point is the observation that since  $\eta(\mathbf X) = 0$, $\P^{k}_{\theta^{\prime}} \eta$ converges exponentially fast to the zero measure, see Proposition \ref{prop:Contraction of sigma beta}. To estimate the $k$\,th term of \eqref{eq:Lipschitz for Pn etaX 0 eq1}, we apply Lemma \ref{lemma:P theta Lipschitz wrt signed eta} and Proposition \ref{prop:sigma beta eq rho beta}, \eqref {eq:sigma beta eq rho beta},
\begin{align}
\sigma_\beta( (\P_{\theta}-\P_{\theta^{\prime}})\P^{k}_{\theta^{\prime}}\eta) &\leq L_P|\theta-\theta^{\prime}| |\P^{k}_{\theta^{\prime}}\eta|(1+\beta V)\nonumber\\
& =  L_P|\theta-\theta^{\prime}| \sigma_{\beta}(\P^{k}_{\theta^{\prime}}\eta).
\end{align}
Now applying Proposition \ref{prop:Contraction of sigma beta} and Proposition \ref{prop:sigma beta eq rho beta}, \eqref{eq:sigma beta eq rho beta}, again, we get the upper bound:
\begin{equation}
L_P|\theta-\theta^{\prime}| \alpha^k \sigma_\beta(\eta) = L_P|\theta-\theta^{\prime}| \alpha^k |\eta|(1+ \beta V).
\end{equation}
Inserting this into \eqref{eq:Lipschitz for Pn etaX 0 eq1}, we get the desired upper bound.
\end{proof}

\begin{proof}[Proof of Lemma \ref{lem:One Boundedness of P in 2 and 3 Norm}]
	    To simplify the notations we write $\P_\theta = \P.$ We have $|\varphi(x)| \le \Vert \varphi \Vert_\beta (1 + \beta V(x))$ from which we get 
		\begin{align}
		\hspace*{-2mm}|\Ps \varphi(x)|  \le \Ps |\varphi |(x) &\le \Vert \varphi \Vert_\beta (1 + \Ps \beta V(x)) \nonumber \\ 
		& \le \Vert \varphi \Vert_\beta (1 + \beta (\gamma_1 V(x) + K_1)),
		\end{align}
		by Assumption \ref{cond:Uniform One Step Growth Condition for P theta}. The last term on the right hand side is majorized by $ \alpha^{\prime} (1 + \beta V(x))$ with $\alpha^{\prime} = \gamma_1 \vee (1 + \beta K_1),$ proving the first half of \eqref{eq:One Step Stability of P star in 2 and 3 Norm}.
		To prove the second half of \eqref{eq:One Step Stability of P star in 2 and 3 Norm} recall that for any $\psi \in {\cal L}_V$ we have $\vertiii{\psi}_\beta = \min_c \| \psi + c \|_\beta.$ Hence for any constant $c$ we have 
		\begin{equation*}
        \vertiii {\Ps \varphi}_\beta  = \vertiii {\Ps \varphi + c}_\beta \le \Vert {\Ps \varphi + c}  \Vert_\beta = \Vert {\Ps (\varphi + c)}  \Vert_\beta. 
        \label{eq:Pphi 3 norm vs Pphi 2 norm}
        \end{equation*}
		Apply the first inequality of \eqref{eq:One Step Stability of P star in 2 and 3 Norm} with 
		$\varphi + c$ replacing $\varphi:$  
		\begin{equation} 
		\Vert \Ps (\varphi + c) \Vert_{\beta} \le \alpha^{\prime} \Vert \varphi + c \Vert_\beta. 
		\end{equation}
		Choosing $c$ so that  $\Vert {\varphi + c} \Vert_\beta = \vertiii{\varphi}_\beta$ yields the claim. 
\end{proof}

			\begin{proof}[Proof of Lemma \ref{lemma:Lipschitz for Pn with OneStepGrowth}] The proof is obtained by a simple modification of the proof of Lemma \ref{lemma:Lipschitz for Pn with eta}. Estimate $\sigma_\beta(\P^n_{\theta}\eta,\P^n_{\theta^{\prime}}\eta)$ using a sequence of triangular inequalities to get 
			    \vspace{-1mm}
				\begin{equation*}
				\label{eq:P1n vs P2n Triangle CP}
				\sigma_\beta(\P^n_{\theta}\eta,\P^n_{\theta^{\prime}}\eta) 
				\,\leq\,  \sum_{k=0}^{n-1}\sigma_\beta( \P^{n-k}_{\theta}\P^{k}_{\theta^{\prime}} \eta, \P^{n-k-1}_{\theta}\P^{k+1}_{\theta^{\prime}} \eta  ).
                \end{equation*}
Consider the $k$th term and apply Lemma \ref{lemma:one step Stab for sigma beta}, or  \eqref{eq:one step Stab for sigma beta},  repeatedly $n-k-1$ times setting $\eta_1 = \P_{\theta}\P^{k}_{\theta^{\prime}} \eta$ and $\eta_2 =  \P^{k+1}_{\theta^{\prime}} \eta$:
				\begin{equation}
				\sigma_\beta(\P^{n-k-1}_{\theta} \eta_1, \P^{n-k-1}_{\theta} \eta_2) \leq (\alpha^{\prime})^{n-k-1} \sigma_\beta(\eta_1,\eta_2).
				\end{equation}
Note that the conditions of Lemma \ref{lemma:one step Stab for sigma beta} are satisfied for $\eta_1,\eta_2$: obviously $\eta_1(\mathbf X) = \eta_2(\mathbf X)=\eta(\mathbf X) < \infty$ and $|\eta_i|(V) < \infty$, for $i=1,2$ due to the repeated application of the one-step growth condition. Combining the last two inequalities we get:
				\begin{equation}
				\label{eq:P1n vs P2n Triangle plus Contraction Modified}
				\sigma_\beta(\P^n_{\theta}\eta,\P^n_{\theta^{\prime}}\eta) \le \sum_{k=0}^{n-1} (\alpha^{\prime})^{n-k-1} \sigma_\beta( \P_{\theta}\P^{k}_{\theta^{\prime}} \eta, \P^{k+1}_{\theta^{\prime}} \eta  ) .
				\end{equation}
				Consider the $k$-th term, and recall the Lipschitz continuity of $(\P_{\theta}),$ Assumption \ref{cond:Lipschitz of P theta Dirac x by Miklos}, implying Lemma \ref{lemma:P theta Lipschitz wrt signed eta}. Applying the latter for the signed measure $\P^{k}_{\theta^{\prime}} \eta$ we get the upper bound 
				\begin{equation}
				\label{eq:Sum of Pk theta Lipschitz CP}
				\sum_{k=0}^{n-1}   (\alpha^{\prime})^{n - k-1}
				L_P |\theta-\theta^{\prime}|\cdot |\P_{\theta^{\prime}}^k\eta|(1+\beta V).
				\end{equation}
	
To estimate $|\P_{\theta^{\prime}}^k\eta|(V)$, we use \eqref{eq:Pk mu V upper bound under one step growth}, restated as  
  \begin{align}
  \label{eq:Pk mu V upper bound under one step growth CP}
    |\P_{\theta'}^k\eta|(V) \le \gamma_1^k \left(|\eta|(V)+K_1|\eta|(\mathbf X)/(\gamma_1-1) \right).
  \end{align}
By plugging 
this into \eqref{eq:Sum of Pk theta Lipschitz CP}, we get the upper bound
				\begin{equation*}
				L_P |\theta-\theta^{\prime}|  \sum_{k=0}^{n-1}  (\alpha^{\prime})^{n-k-1} 
				\left( 1+\beta \gamma_1^k \left(|\eta|(V)+\frac{ K_1 |\eta|(\mathbf X)}{\gamma_1-1} \right)\right).
				\end{equation*}
				The first term in the above sum is bounded from above by 			$(\alpha^{\prime})^{n}/ (\alpha^{\prime} -1).$ The second term can be written as  
					\begin{equation}
				\sum_{k=0}^{n-1}  (\alpha^{\prime})^{n-k-1} \gamma_1^k
				\beta \left( |\eta|(V)+\frac{ K_1 |\eta|(\mathbf X)}{\gamma_1-1}\right).
				\end{equation}
Recall that $\gamma_1 \le \alpha^{\prime}$, hence a simplified upper bound is
\begin{equation}
				n (\alpha^{\prime})^{n-1} 
				\beta \left( |\eta|(V)+\frac{ K_1 |\eta|(\mathbf X)}{\gamma_1-1}\right),
				\end{equation}
and $n (\alpha^{\prime})^{n-1}$ can be bounded from above by $C (\alpha^{\prime\prime})^{n-1}$ for any $\alpha^{\prime\prime} > \alpha^{\prime},$ where $C$ depends only on $\alpha^{\prime}$, and $\alpha^{\prime\prime}.$
Summarizing the inequalities \eqref{eq:P1n vs P2n Triangle plus Contraction Modified} to \eqref{eq:Pk mu V upper bound under one step growth CP}, and 
				the arguments that follow, we get the claim.
			\end{proof}

\begin{proof}[Proof of Lemma \ref{lem:Drift_Cond_Valid}]
	Since the function $g(\chi) : = {\mathbb E} \,[ \, e^{\chi U}\, ]$ is convex in $\chi,$ the finite difference quotients are monotone non-increasing for $\chi \downarrow 0$ with negative limit:   
	\begin{equation}
	\label{eq:Drift_Cond_Valid_Proof_1}
	\lim_{\chi \downarrow 0}{\frac {{\mathbb E} \,[ \, e^{\chi U }\, ]  - 1} {\chi}} \downarrow {\mathbb E} \, U <0.
	\end{equation}
	Hence there exists a $0 < \chi_0 < \eta$ and some $\varepsilon > 0$ such that for all $0< \chi \le \chi_0$ 
	\begin{equation}
	\label{eq:Drift_Cond_Valid_Proof_2}
	\sup_{0 < \chi \le \chi_0}{\frac {{\mathbb E} \,[ \, e^{\chi U }\, ]  - 1} {\chi}} \le - \varepsilon < 0.
	\end{equation}
	It follows that ${\mathbb E} \,[ \, e^{\chi U }\, ] \le  1  - {\chi}  \varepsilon, $ and thus we get 
	\begin{align}
	\label{eq:Drift_Cond_Valid_Proof_3}
	&{\mathbb E} \,[ \, e^{\chi (x + U)^+}\, ] \nonumber \\
	= \, &{\mathbb E} \,[ \mathbf{1}_{\{x + U \ge 0\}}\, e^{\chi (x + U)^+}\, ] + {\mathbb E} \,[ \mathbf{1}_{\{x + U < 0\}}\, e^{\chi (x + U)^+}\, ]\nonumber
	 \\   \le \, &{\mathbb E} \,[ \, e^{\chi (x + U)}\, ] + 1 \le 
	e^{\chi x} (1  - {\chi}\varepsilon)  + 1.
	\end{align}
\end{proof}
\begin{proof}[Proof of Lemma \ref{lem:Convex_Uniform_Negative_Slope}]
	Let us define the function
	\begin{equation}
	\label{eq:Convex_Uniform_Negative_Slope_Proof_1}
	g(\chi):= \sup_{\theta \in D} g(\chi, \, \theta),
	\end{equation}
	\tb{for $0 \le \chi < \eta.$} Obviously, $g(\cdot)$ is convex and $g(0)=1.$ The claim of the lemma can be then restated as saying that there exists $\chi_0 >0$ such that for $0< \chi \le \chi_0$ we have 
	\begin{equation}
	\label{eq:Convex_Uniform_Negative_Slope_with_sup}
	{\frac { g(\chi)   - 1} {\chi}} < 0 \qquad {\rm or} \qquad  \inf_{0 < \chi <  \chi_0}{\frac { g(\chi)   - 1} {\chi}} < 0.
	\end{equation}

	Assume that the claim is not true, and let $\chi_n \downarrow 0$ be a monotone sequence such that we have 
	\begin{equation}
	\label{eq:Convex_Uniform_Negative_Slope_with_sup_2}
	{\frac { g(\chi_n)   - 1} {\chi_n}} \ge 0 \qquad {\rm or} \qquad  { g(\chi_n)   \ge 1}.
	\end{equation}
	Let $\theta_n \in D$ be such that 
	\begin{equation*}
	\label{eq:Convex_Uniform_Negative_Slope_with_sup_3}
	g(\chi_n) = \sup_{\theta \in D} \, g(\chi_n, \, \theta) = \max_{\theta \in D} \, g(\chi_n, \, \theta) = g(\chi_n, \, \theta_n).
	\end{equation*}
	Due to the compactness of $D$ we can assume that $\theta_n \in D$ has a limit in $D,$ say $\lim \theta_n = \theta^\ast \in D.$ Consider now the function $g(\cdot \,, \theta^\ast)$ and choose a $\chi_0$ such that 
	\begin{equation}
	\label{eq:Convex_Uniform_Negative_Slope_with_sup_4}
	{\frac { g(\chi_0, \theta^\ast)   - 1} {\chi_0}} < 0 \qquad {\rm or} \qquad  g(\chi_0, \theta^\ast) =: 1- c < 1.
	\end{equation}
	The continuity of $g(\chi_0, . \,)$ in $\theta$ implies $g(\chi_0, \theta_n) \le 1- c/2  < 1$ for sufficiently large $n$. On the other hand the convexity of the function $g(.\,, \theta_n),$ and $g(0\,, \theta_n) = 1$ and $ g(\chi_n\,, \theta_n) \ge 1$ imply that for $\chi_0 > \chi_n$ we have $g(\chi_0, \theta_n) \ge 1,$ a contradiction, proving the claim. 
\end{proof}

\begin{proof}[Proof of Lemma \ref{minori}]
	By Lemma \ref{lem:Convex_Uniform_Negative_Slope}, it follows that for sufficiently small $\chi$ we 
	have $A:=\sup_{\theta \in D} \mathbb E [e^{\chi U_{\theta, 1}}] <1$ hence
\begin{align}
	&\sup_{\theta \in D} P(U_{\theta, 1}\geq -\epsilon) = \sup_{\theta \in D} P(e^{\chi U_{\theta, 1}}   
	\geq e^{-\chi\epsilon}) \leq \nonumber \\
	&\sup_{\theta \in D} \mathbb E [e^{\chi U_{\theta, 1}}]e^{\chi\epsilon}\leq Ae^{\chi\epsilon},
\end{align}
	which is strictly smaller than $1$ for $\epsilon$ small enough. Thus indeed, $v:= \inf_{\theta \in D} P(U_{\theta, 1} < -\epsilon) >0,$ as stated.
\end{proof}

\begin{proof}[Proof of Corollary \ref{corollary:3}]
	Indeed, let $\epsilon,\upsilon$ be as in Lemma \ref{minori} and choose $r$ so large that $r\epsilon>R$. Then, for all $\theta\in D$ and $0\leq x\leq R$,
	$P(W_{\theta, r}(x)=0)$ is bounded from below by 
	\begin{align}
	 &P  (W_{\theta, k}(x)\leq (x-k\epsilon)^+,\ \forall \, k=1,\ldots, r ) 	\ge \nonumber\\
 &P (U_{\theta,k}< -\epsilon,\ \forall \, k=1,\ldots,r)\geq \upsilon^{r}.
	\end{align}
\end{proof}

\begin{proof}[Proof of Lemma \ref{lem:Lip_Stated}]
	We can write
	$$
	\int_0^{\infty} \phi (y) \, P_{\theta}(x,\dd y) = \int_{-x}^{\infty} \phi (x+z) \, \zeta_{\theta}(z) \, \dd z + \phi(0) \cdot  P_{\theta}(x, \{0\}). 
	$$
	First, for the regular part (first term on the r.h.s.) we have  
    \begin{align} 
	& & \int_{-x}^{\infty}\phi(x+z) \left(\zeta_\theta(z)
	- \zeta_{\theta'}(z) \right)\, \dd z  \nonumber \\
    &\leq& \int_{-x}^{\infty}(1+\beta V(x+z))|\theta-\theta'|
	\, C^{\prime \prime} e^{-\eta^{\prime \prime} |z|} \, \dd z \nonumber \\	
    &\leq& C^{\prime \prime} |\theta-\theta'| \int_{0}^{\infty}(1+\beta e^{\chi (x+z)}) \,e^{-\eta^{\prime \prime} z}  \, \dd z \nonumber \\
	&+& C^{\prime \prime} |\theta-\theta'| \int_{-\infty}^{0}(1+\beta e^{\chi (x+z)})\, e^{-\eta^{\prime \prime} |z|}  \, \dd z \nonumber \\	
	\label{eq:Lip_for_Absolute_Cont_Part_of_P}	
	&\leq& C^{\prime \prime} |\theta-\theta'| \left({\frac 1 {\eta^{\prime \prime}}}  + \beta e^{\chi x} {\frac 1 {\eta^{\prime \prime} - \chi}} \right) \nonumber \\
	&+& C^{\prime \prime} |\theta-\theta'| \left({\frac 1 {\eta^{\prime \prime}}}  + \beta e^{\chi x} {\frac 1 {\eta^{\prime \prime} + \chi}} \right)\nonumber \\
	&\leq& C^{\prime \prime \prime} |\theta-\theta'| (1+\beta V(x)), 
	\end{align}
	with some constant $C^{\prime \prime \prime}.$ On the other hand, for the atomic component (second term on the r.h.s.) we get  
	\begin{align}
	\label{eq:Lip_for_Atomic_Part}
	& \phi(0)  \, \int_{-\infty}^{-x} \left(\zeta_\theta(z)
	- \zeta_{\theta'}(z) \right)\, \dd z  
	\le \nonumber \\
	& \phi(0) \int_{-\infty}^{-x} |\theta-\theta'|
	\, C^{\prime \prime}  e^{-\eta^{\prime \prime} |z|} \, \dd z \le \nonumber \\
	 & \phi(0) \,\, C^{\prime \prime} |\theta-\theta'| \, {\frac 1 {\eta^{\prime \prime}}}.
	\end{align}
	Inequalities \eqref{eq:Lip_for_Absolute_Cont_Part_of_P} and \eqref{eq:Lip_for_Atomic_Part} imply the claim of the lemma. 
\end{proof}

\end{appendices}

\bibliography{poisson}

\end{document}